\numberwithin{equation}{section}
\theoremstyle{plain}
\newtheorem{thm}{Theorem}[section]
\newtheorem{rem}{Remark}[section]
\newtheorem{prop}{Proposition}[section]
\newtheorem{lem}{Lemma}[section]
\newtheorem{defi}[lem]{Definition}
\newcommand{\dC}{\mathbb{C}}
\newcommand{\dI}{\mathbb{I}}
\newcommand{\dN}{\mathbb{N}}
\newcommand{\dZ}{\mathbb{Z}}
\newcommand{\dE}{\mathbb{E}}
\newcommand{\dP}{\mathbb{P}}
\newcommand{\cH}{\mathcal{H}}
\newcommand{\cD}{\mathcal{D}}
\newcommand{\cR}{\mathcal{R}}
\newcommand{\cS}{\mathcal{S}}
\newcommand{\cHz}{\cH_0}
\newcommand{\cHa}{\cH_1}
\newcommand{\veps}{\varepsilon}
\newcommand{\wh}{\widehat}
\newcommand{\whveps}{\wh{\varepsilon}}
\newcommand{\whvepsq}{\wh{\varepsilon}^{~ 2}}
\newcommand{\hsp}{\hspace{0.5cm}}
\newcommand{\whk}{\wh{K}_{T}}
\newcommand{\liml}{\overset{\cD}{\longrightarrow}}
\newcommand{\egl}{\overset{\cD}{=}}
\newcommand{\limp}{\overset{\dP}{\longrightarrow}}
\newcommand{\limT}{\lim_{T\, \rightarrow\, \infty}}
\newcommand{\dd}{\, \mathrm{d}}
\newcommand{\cvgps}{\hspace{0.3cm} \textnormal{a.s.}}
\newcommand{\sigeps}{\sigma^2_{\veps}}
\newcommand{\sige}{\sigma^2_{\eta}}
\email{frederic.proia@univ-angers.fr}
\begin{document}

\title[Stationarity vs integration in the AR process]
{Stationarity against integration in the autoregressive process with polynomial trend
\vspace{2ex}}
\address{Laboratoire Angevin de REcherche en MAth\'ematiques (UMR 6093). Universit\'e d'Angers, D\'epartement de math\'ematiques, Facult\'e des Sciences, 2 Boulevard Lavoisier, 49045 Angers cedex, France.}
\author{Fr\'ed\'eric Pro\"ia}
\thanks{}

\begin{abstract}
We tackle the stationarity issue of an autoregressive path with a polynomial trend, and we generalize some aspects of the LMC test, the testing procedure of Leybourne and McCabe. First, we show that it is possible to get the asymptotic distribution of the test statistic under the null hypothesis of trend-stationarity as well as under the alternative of nonstationarity, for any polynomial trend of order $r$. Then, we explain the reason why the LMC test, and by extension the KPSS test, does not reject the null hypothesis of trend-stationarity, mistakenly, when the random walk is generated by a unit root located at $-1$. We also observe it on simulated data and we correct the procedure. Finally, we describe some useful stochastic processes that appear in our limiting distributions.
\end{abstract}
\keywords{LMC test, KPSS test, Unit root, Stationarity testing procedure, Polynomial trend, Stochastic nonstationarity, Random walk, Integrated process, ARIMA process, Donsker's invariance principle, Continuous mapping theorem.}

\maketitle

\noindent \textbf{Notations.} In all the paper, we define $L$ as the lag operator with the convention that $L^0 = I$. In addition, $k_{T} = k/T$ is the renormalization of any $k \in \dN$, and $\dI$ designates the indicator function. We will always consider that $0 < \tau \leq 1$ and that $[T \tau]$ denotes the integer part of $T \tau$. To lighten the notations, we will usually refer to the corresponding vector by removing the implicit subscript on the variable. For example, $\veps^{\, \prime} = ( \veps_1 \hsp \hdots \hsp \veps_{T})$ where $\veps^{\, \prime}$ is the transpose of $\veps$.

\section{A consistent test for a unit root}
\label{SecSingle}


We consider the autoregressive process of order $p$ on $\dZ$ with a polynomial trend of order $r$, driven by a random walk and an additive error. For an observed path of size $T$, we investigate the model given, for all $1 \leq t \leq T$, by
\begin{equation}
\label{ModSingle}
\Theta(L) X_{t} = ( \alpha_0 + \alpha_1 t_{T} + \hdots + \alpha_{r} t_{T}^{r})\, \dI_{\{ \kappa\, \neq\, 0 \}} + S_{t}^{\eta} + \veps_{t}
\end{equation}
where, for all $z \in \dC$, $\Theta(z) = 1 - \theta_1 z - \hdots - \theta_{p}\, z^{p}$ is an autoregressive polynomial having all its zeroes outside the unit circle, where, for any $\vert \rho \vert = 1$,
\begin{equation}
\label{ModRandWalk}
S^{\eta}_{t} = \rho S^{\eta}_{t-1} + \eta_{t}
\end{equation}
is a random walk starting from $S_0^{\eta} = 0$, and where $(\veps_{t})$ and $(\eta_{t})$ are uncorrelated white noises of variance $\sigma_{\veps}^2 > 0$ and $\sigma_{\eta}^2 \geq 0$, respectively. From now on, white noises are to be interpreted in the strong sense, that is as sequences of independent and identically distributed random variables. For the sake of simplicity, we consider that $X_{-p+1} = \hdots = X_{-1} = 0$. We also normalize the known part of the trend, by selecting $t_{T} = t/T$, to simplify the treatment of the projections, as we will see in the technical proofs. The order of the polynomial trend is $r$, but we will also take account of the case where no trend is introduced in \eqref{ModSingle}. We switch from one situation to another by selecting $\kappa \neq 0$ or $\kappa = 0$. Our objective is to establish a testing procedure for
\begin{equation*}
\cH_0 : ``\sige = 0" \hsp \hsp \text{against} \hsp \hsp \cH_1 : ``\sige > 0".
\end{equation*}
One can observe that \eqref{ModSingle} is a trend-stationary process under the null $\cHz$, since the process $(S^{\eta}_{t})$ is almost surely zero, and an integrated process of order 1 under the alternative $\cHa$. Hence, evaluating $\cHz$ against $\cHa$ is equivalent to testing stationarity against integration in the stochastic part of the process. In this context, our work is a generalization of the procedure of \cite{LeybourneMcCabe94}, shortened \textit{LMC test} in all the sequel. In their original paper, they propose to make use of the maximum likelihood estimator of $\theta$ on a given path of size $T$ and to estimate the trend parameters using a least squares methodology on the residual process. Then, they build a test statistic and establish its behavior under the null hypothesis of stationarity for specific trends (none, constant or linear). Under $\cH_1$, they show that the test statistic diverges with rate $T$, and that it is possible to get its correctly renormalized asymptotic distribution. In the simple case where $p=0$, \cite{NabeyaTanaka88} had already investigated the founding principles of this strategy. This restriction seems nevertheless far from the reality of time series since all correlation phenomenon has disappeared. Earlier, \cite{NyblomMakelainen83}, \cite{Nyblom86} and \cite{LeybourneMcCabe89} had already taken an interest in such test statistics, for closely related models. The procedure of \cite{KwiatkowskiPhillipsSchmidtShin92}, shortened from now on \textit{KPSS test}, translates any correlation in the residual process, to avoid any preliminary estimation of $p$ and $\theta$. Their test statistic (described later in Remark \ref{RemKPSS}) is shown to reach the same asymptotic distribution but, as a long-run variance has to be estimated instead, there is a truncation at a lag $\ell$ such that $\ell = \ell(T) \rightarrow \infty$ to ensure consistency, and the divergence under $\cH_1$ occurs with rate $T/\ell = o(T)$. One can accordingly expect that the LMC procedure will be more powerful to discriminate $\cH_1$, and such observations are made in \cite{LeybourneMcCabe94}. However, the true value of $p$ is needed and all flexibility is sacrificed, contrasting with the KPSS procedure. The stationarity of time series being a contemporary issue, it is not surprising to find an abundant literature on empirical studies, anomalies detection or improvements brought to these strategies: let us mention \cite{SaikkonenLuukkonen93}, \cite{LeybourneMcCabe99}, \cite{NewboldLeybourneWohar01}, or \cite{Muller05}, \cite{HarrisLeybourneMcCabe07}, \cite{DeJongAmsterSchmidt07}, \cite{PelagattiSen09} and all associated references, without completeness. First we will show that in the context of the LMC test, it is possible to get the asymptotic distribution of the test statistic under $\cH_0$ as well as under $\cH_1$, for any polynomial trend of order $r$. Then, we will explain, and we will observe it on some straightforward simulated data, the reason why the LMC test -- and by extension the KPSS test -- does not reject the null hypothesis of trend-stationarity, mistakenly, when the random walk is generated by a unit root located at $-1$. We have widely been inspired by the calculation methods of \cite{Phillips87}, \cite{KwiatkowskiPhillipsSchmidtShin92} and \cite{LeybourneMcCabe94}, themselves relying on the Donsker's invariance principle and the Mann-Wald's theorem, that we will also recall. Finally, we will describe some useful stochastic processes that appear in our limiting distributions, and we will prove our results.

\medskip

The case $\vert \rho \vert < 1$ corresponds to a trend-stationary process both under $\cHz$ and under $\cHa$, it is consequently not of interest as part of this paper. Combining \eqref{ModSingle} and \eqref{ModRandWalk}, the model under $\cHa$ is
\begin{equation}
\label{ModSingleH1}
\Theta(L) X_{t} = (\alpha_0 + \alpha_1 t_{T} + \hdots + \alpha_{r}\, t_{T}^{r})\, \dI_{\{ \kappa\, \neq\, 0 \}} + \sum_{k=1}^{t} \rho^{t-k} \eta_{k} + \veps_{t}
\end{equation}
where the source of the stochastic nonstationarity of $(X_{t})$ is
\begin{equation}
\label{RWSingle}
S_{t}^{\eta} = \sum_{k=1}^{t} \rho^{t-k} \eta_{k}
\end{equation}
which is the partial sum process of $(\eta_{t})$ when $\rho=1$. First,
\begin{eqnarray*}
\Theta(L) (I - \rho L) X_{t} & = & (I - \rho L)(\alpha_0 + \alpha_1 t_{T} + \hdots + \alpha_{r}\, t_{T}^{r})\, \dI_{\{ \kappa\, \neq\, 0 \}} + (I - \rho L) (S_{t}^{\eta} + \veps_{t}) \\
 & = & (\alpha^{*}_0 + \alpha^{*}_1 t_{T} + \hdots + \alpha^{*}_{r}\, t_{T}^{r})\, \dI_{\{ \kappa\, \neq\, 0 \}} + \eta_{t} + (I - \rho L) \veps_{t}
\end{eqnarray*}
where $\alpha^{*}_0, \alpha^{*}_1, \hdots, \alpha^{*}_{r}$ are easily identifiable (\textit{e.g.} $\alpha^{*}_{r} = 0$ when $\rho = 1$) and the process $(\eta_{t} + (I - \rho L) \veps_{t})$ is second-order equivalent in moments to an MA$(1)$ residual, as it is explained in \cite{KwiatkowskiPhillipsSchmidtShin92}. We obtain the integrated model given, for all $1 \leq t \leq T$, by
\begin{equation}
\label{ModSingleDiff}
\Theta(L) (I - \rho L) X_{t} = (\alpha^{*}_0 + \alpha^{*}_1 t_{T} + \hdots + \alpha^{*}_{r}\, t_{T}^{r})\, \dI_{\{ \kappa\, \neq\, 0 \}} + (I + \beta L)\, \xi_{t}
\end{equation}
where $(\xi_{t})$ is a white noise of variance $\sigma_{\xi}^2$ depending on the so-called \textit{signal-to-noise ratio} $\sigma_{\eta}^2/\sigma_{\veps}^2$. For the generating process \eqref{ModSingleDiff}, we build a consistent estimator of $\theta$ (see Remark \ref{RemEstT} below), and we consider the residual process
\begin{equation}
\label{Ystar}
\check{X}_{t} = X_{t} - \check{\theta}_1 X_{t-1} - \hdots - \check{\theta}_{p} X_{t-p}.
\end{equation}
Note that under $\cHa$, $\vert \beta \vert < 1$, implying that the differentiated process is causal and invertible. On the other hand, $\vert \beta \vert = 1$ under $\cHz$ and the process is not invertible.

\begin{rem}
\label{RemEstT}
The consistency of $\check{\theta}_{T}$ is a crucial issue of the study. Let $\Lambda_{r,\rho}(L)$ be the operator defined as
$$
\Lambda_{r,\rho}(L) = (I - \rho L) (I - L)^{r}.
$$
It follows that
\begin{eqnarray*}
\Lambda_{r,\rho}(L)\, \Theta(L) X_{t} & = & (I - L)^{r}\, (\alpha^{*}_0 + \alpha^{*}_1 t_{T} + \hdots + \alpha^{*}_{r} t_{T}^{r})\, \dI_{\{ \kappa\, \neq\, 0 \}} + (I - L)^{r}\, (I + \beta L)\, \xi_{t} \\
 & = & \mu^{*} + \Phi(L)\, \xi_{t}
\end{eqnarray*}
where $\mu^{*}$ is easily identifiable ($\mu^{*}=0$ when $\rho=1$ or $\kappa=0$) and $\Phi$ is a moving average polynomial of order $r+1$. Now, let
$$
Y_{t} = \Lambda_{r,\rho}(L)\, X_{t}.
$$
Clearly, $\Theta(L) Y_{t} = \mu^{*} + \Phi(L)\, \xi_{t}$ implying that $(Y_{t})$ is a causal ARMA$(p,r+1)$ process having a potentially nonzero intercept. Under $\cH_1$, $\Phi$ has $r$ unit roots and its last zero is outside the unit circle (since $\vert \beta \vert < 1$). Under $\cH_0$, $\Phi$ has $r+1$ unit roots. In both situations, Theorem 2.1 of \cite{Potscher91} ensures that a pseudo-MLE is consistent for $(\theta, \beta)$ treating $(\xi_{t})$ as  a Gaussian noise, whereas $\mu^{*}$ is easily estimated as intercept of a stationary ARMA process. Nevertheless, only the AR part of the process is of interest for us, and a faster method is worth considering. The causality of $\Theta$ implies that there exists a causal representation
$$
Y_{t} = \Theta^{-1}(L)\left( \mu^{*} + \Phi(L)\, \xi_{t} \right) = \nu^{*} + \sum_{k=0}^{\infty} \psi_{k}\, \xi_{t-k}
$$
such that, according to Chapter 7 of \cite{BrockwellDavis96}, the sample autocovariance function $\wh{\gamma}_{T}$ of $(Y_{t} - \nu^{*})$ is a consistent estimator of its autocovariance function $\gamma_{Y}$. Using a Yule-Walker approach, for all $h \in \{ r+2, \hdots, p+r+1 \}$,
$$
\gamma_{Y}(h) = \sum_{k=1}^{p} \theta_{k} \gamma_{Y}(h-k).
$$
Hence, a consistent estimator of $\theta$ may be obtained \textit{via} $\wh{\gamma}_{T}$. The selection of $\rho$ will be widely discussed in Section \ref{SecEmpi}.
\end{rem}

As a result of the previous remark, it makes sense to estimate $\alpha$ under $\cHz$ using a least squares methodology in the model given by
\begin{equation}
\label{ModSingleH0}
\check{X}_{t} = (\alpha_{0} + \alpha_1 t_{T} + \hdots + \alpha_{r}\, t_{T}^{r})\, \dI_{\{ \kappa\, \neq\, 0 \}} + \check{\veps}_{t}
\end{equation}
where $(\check{\veps}_{t})$ is the residual process coming from the estimation of $\theta$. A second-order residual set $(\whveps_{t})$ is then built \textit{via}
\begin{equation}
\label{EstRes}
\whveps_{t} = \check{X}_{t} - (\wh{\alpha}_0 + \wh{\alpha}_1 t_{T} + \hdots + \wh{\alpha}_{r}\, t_{T}^{r})\, \dI_{\{ \kappa\, \neq\, 0 \}}
\end{equation}
where $\wh{\alpha}_{T}$ is the least squares estimator of $\alpha$ in the model \eqref{ModSingleH0}. Let the partial sum processes of $(\whveps_{t})$ and $(\whvepsq_{t})$ be defined as
\begin{equation}
\label{SumEstRes}
S_{t} = \sum_{k=1}^{t} \whveps_{k} \hsp \hsp \text{and} \hsp \hsp Q_{t} = \sum_{k=1}^{t} \whvepsq_{k}.
\end{equation}
Finally, consider the test statistic
\begin{equation}
\label{TestStat}
\whk = \frac{1}{T Q_{T}}\, \sum_{t=1}^{T} S_{t}^{\, 2}.
\end{equation}

\begin{rem}
\label{RemKPSS}
The test statistic of the KPSS procedure is very close to $\whk$. The main difference is that $(\veps_{t})$ satisfies some weaker assumptions including correlation (see \cite{KwiatkowskiPhillipsSchmidtShin92}), leading to $p=0$ and no parameter $\theta$ to estimate. In return, a long-run variance defined as
$$
\sigma^2 = \limT \frac{1}{T}\, \dE[S_{T}^2]
$$
has to be estimated using a truncation method. The test statistic is
$$
\whk = \frac{1}{T^2\, \wh{\sigma}_{T}^2}\, \sum_{t=1}^{T} S_{t}^{\, 2}
$$
and corresponds to \eqref{TestStat} when $\wh{\sigma}_{T}^2 = Q_{T}/T$, that is when the long-run variance is estimated as a white noise variance.
\end{rem}

We now establish the asymptotic behavior of $\whk$ under $\cHz$. The stochastic processes appearing in our limiting distributions are described in the next section.
\begin{thm}
\label{ThmStatH0}
Assume that $\sigma_{\eta}^2 = 0$. Then, for $\kappa \neq 0$, we have the weak convergence
\begin{equation*}
\whk \liml \int_0^1 B_{r}^{\, 2} (s) \dd s
\end{equation*}
where $(B_{r}(t),\, t \in [0,1])$ is the generalized Brownian bridge of order $r$. In addition, for $\kappa = 0$, we have the weak convergence
\begin{equation*}
\whk \liml \int_{0}^{1} W^{\, 2}(s) \dd s
\end{equation*}
where $(W(t),\, t \in [0,1])$ is the standard Wiener process.
\end{thm}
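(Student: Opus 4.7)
The strategy is to write the test statistic as $\whk = N_{T}/D_{T}$ with $D_{T} = Q_{T}/T$ and $N_{T} = T^{-2} \sum_{t=1}^{T} S_{t}^{\, 2}$, to treat the denominator via a law of large numbers and the numerator via Donsker's invariance principle followed by the continuous mapping theorem, and to conclude with Slutsky's lemma. Under $\cHz$ one has $\sige = 0$, so that \eqref{ModSingle} reduces to $\cA(L) Y_{t} = (\alpha_0 + \hdots + \alpha_{r} t_{T}^{r})\, \dI_{\{\kappa\, \neq\, 0\}} + \veps_{t}$. Combined with the consistency of $\check{\theta}_{T}$ (granted earlier), this allows replacement of $\check{Y}_{t}$ by $\cA(L) Y_{t}$ up to a remainder of order $O_{\dP}(T^{-1/2})$ on each coordinate, which will be repeatedly exploited.

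\medskip

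The convergence $D_{T} \limp \sigeps$ is the easy part. After inserting the decomposition \eqref{EstRes}, the consistency of $\check{\theta}_{T}$ and of the least squares estimator $\wh{\alpha}_{T}$ of \eqref{ModSingleH0} implies that the average of $\whvepsq_{t}$ differs from that of $\veps_{t}^{\, 2}$ by a $o_{\dP}(1)$ term, and the ergodic theorem for the iid sequence $(\veps_{t})$ closes this step.

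\medskip

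For the numerator, Donsker's theorem yields $W_{T}(\tau) := (\sigma_{\veps} \sqrt{T})^{-1} \sum_{k=1}^{[T\tau]} \veps_{k} \liml W(\tau)$ in $D[0,1]$ endowed with the Skorokhod topology. For $\kappa = 0$ no detrending takes place, the AR pre-filtering introduces only a $O_{\dP}(T^{-1/2})$ error on $(\sigma_{\veps} \sqrt{T})^{-1} S_{[T\tau]}$ uniformly in $\tau$, and the continuous mapping theorem applied to $x \mapsto \int_{0}^{1} x^{\, 2}(s) \dd s$ directly gives the announced limit. For $\kappa \neq 0$, let $\phi(s) = (1,\, s,\, \hdots,\, s^{r})^{\prime}$ and $M_{r} = \int_{0}^{1} \phi(s) \phi(s)^{\prime} \dd s$; the Riemann sum argument gives $T^{-1} \sum_{t=1}^{T} \phi(t_{T}) \phi(t_{T})^{\prime} \rightarrow M_{r}$, invertible by linear independence of the monomials on $[0,1]$, and the joint weak convergence
\begin{equation*}
\left( W_{T}(\tau),\, \frac{1}{\sigma_{\veps} \sqrt{T}} \sum_{k=1}^{T} \phi(k_{T}) \veps_{k} \right) \liml \left( W(\tau),\, \int_{0}^{1} \phi(s) \dd W(s) \right),
\end{equation*}
combined with the OLS expansion $\sqrt{T} (\wh{\alpha}_{T} - \alpha) = M_{r}^{-1} T^{-1/2} \sum_{t=1}^{T} \phi(t_{T}) \veps_{t} + o_{\dP}(1)$, yields
\begin{equation*}
\frac{S_{[T\tau]}}{\sigma_{\veps} \sqrt{T}} \liml W(\tau) - \int_{0}^{\tau} \phi(u)^{\prime} \dd u \cdot M_{r}^{-1} \int_{0}^{1} \phi(s) \dd W(s) =: B_{r}(\tau),
\end{equation*}
after which the continuous mapping theorem together with Slutsky concludes.

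\medskip

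The main obstacle is the propagation of the two stages of estimation (first $\check{\theta}_{T}$ from the differenced model \eqref{ModSingleDiff}, then $\wh{\alpha}_{T}$ from \eqref{ModSingleH0}) through the \textit{partial} sum $S_{[T\tau]}$ uniformly in $\tau \in [0,1]$. Pointwise consistency is not enough: one has to argue tightness of the remainder processes in $D[0,1]$ to apply the continuous mapping theorem on the functional $\int_{0}^{1} x^{\, 2}(s) \dd s$, which is precisely why the trend is renormalized via $t_{T} = t/T$ and why $B_{r}$ is described as the projection of $W$ onto the orthogonal complement of $\textnormal{span}\{ 1,\, s,\, \hdots,\, s^{r} \}$ in $L^{2}[0,1]$, a viewpoint to be developed in the section devoted to the limiting processes.
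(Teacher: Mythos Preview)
Your approach is essentially the paper's: split $\whk$ into numerator and denominator, handle $Q_{T}/T$ by a law of large numbers, handle $T^{-2}\sum S_{t}^{\,2}$ by an invariance principle plus the continuous mapping theorem, and conclude with Slutsky. Your explicit construction of $B_{r}$ via the joint Donsker limit and the OLS expansion is precisely the content of Theorem~1 of MacNeill \cite{MacNeill78}, which the paper invokes as a black box.

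One step of your outline needs adjustment, though. You assert that consistency of $\check{\theta}_{T}$ gives $\check{Y}_{t}-\cA(L)Y_{t}=O_{\dP}(T^{-1/2})$ per coordinate and then write
\begin{equation*}
\sqrt{T}\,(\wh{\alpha}_{T}-\alpha)=M_{r}^{-1}\,T^{-1/2}\sum_{t=1}^{T}\phi(t_{T})\,\veps_{t}+o_{\dP}(1).
\end{equation*}
But under $\cH_{0}$ the differenced model \eqref{ModSingleDiff} has a non-invertible MA part ($\vert\beta\vert=1$), and the paper's Lemma~\ref{LemARMA} delivers only \emph{consistency} of $\check{\theta}_{T}$, with no rate. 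With merely $\check{\theta}_{T}-\theta=o_{\dP}(1)$, the AR contribution to $\sqrt{T}\,(\wh{\alpha}_{T}-\alpha)$, namely $M_{r}^{-1}\,T^{-1/2}\sum_{t}\phi(t_{T})\sum_{i}(\theta_{i}-\check{\theta}_{i})Y_{t-i}$, is \emph{not} $o_{\dP}(1)$: the lagged $Y_{t-i}$ carry a polynomial trend of order $r$, so $T^{-1/2}\sum_{t}\phi(t_{T})Y_{t-i}$ is of exact order $\sqrt{T}$. Your separate expansions of $\check{Y}_{t}$ and $\wh{\alpha}_{T}$ each contain a diverging piece; they do cancel in $\whveps_{t}$, but the display above hides this.

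The paper avoids the issue by writing the residual vector directly as $\whveps=(I-P)\check{Y}$, so that the AR error enters as $\sum_{i}(\theta_{i}-\check{\theta}_{i})\,u_{i,t}$ with $u_{i,t}$ the $t$-th coordinate of $(I-P)Y_{-i}$. Since $(I-P)$ annihilates the degree-$r$ polynomial trend in $Y_{-i}$, only the detrended stationary part $(I-P)\mu_{-i}$ remains; its normalized partial sums are tight by an invariance principle for the stable AR process $(\mu_{t})$, and consistency of $\check{\theta}_{T}$ alone then makes the whole AR remainder $o_{\dP}(1)$ on $T^{-1/2}S_{[T\tau]}$, uniformly in $\tau$. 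In short: do not expand $\wh{\alpha}_{T}$ separately---project first, and the trend in the lagged regressors disappears for free, so no rate on $\check{\theta}_{T}$ is needed.
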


\medskip

In the following theorem, we show that $\whk$ diverges under $\cHa$ for $\rho=1$ with rate $T$ and we study the asymptotic behavior of the test statistic correctly renormalized. We also show that it decreases to zero under $\cHa$ for $\rho=-1$.
\begin{thm}
\label{ThmStatH1}
Assume that $\sigma_{\eta}^2 > 0$. Then, for $\kappa \neq 0$ and $\rho=1$, we have the weak convergence
\begin{equation*}
\frac{\whk}{T} \liml \frac{\int_0^1 C_{r,\, 1}^{\, 2}(s) \dd s}{\int_0^1 W_{r,\, 0}^{\, 2}(s) \dd s}
\end{equation*}
where $(C_{r,\, 1}(t),\, t \in [0,1])$ is the integrated Brownian bridge of order $r \times 1$ and $(W_{r,\, 0}(t),\, t \in [0,1])$ is the detrended Wiener process of order $r \times 0$. In addition, for $\kappa = 0$, we have the weak convergence
\begin{equation*}
\frac{\whk}{T} \liml \frac{\int_0^1 W^{(1)\, 2}(s) \dd s}{\int_0^1 W^{\, 2}(s) \dd s}
\end{equation*}
where $(W^{(1)}(t),\, t \in [0,1])$ is the integrated Wiener process of order 1 and $(W(t),\, t \in [0,1])$ is the standard Wiener process. Finally, for $\rho=-1$,
\begin{equation*}
\whk \limp 0.
\end{equation*}
\end{thm}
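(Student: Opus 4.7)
The strategy for the two first convergences is to transfer the asymptotic behaviour of the residual process $(\whveps_t)$ back to a continuous functional limit via Donsker's invariance principle. Under $\cHa$, model \eqref{ModSingle} gives $\cA(L) Y_t = T_t + S_t^{\eta} + \veps_t$, so once the MLE $(\check{\theta}_T, \check{\beta}_T)$ fitted through \eqref{ModSingleDiff} is shown to be consistent, the residual $\check{Y}_t = \check{\cA}(L) Y_t$ equals $\cA(L) Y_t$ up to lower-order terms, and the least-squares step of \eqref{EstRes} projects the rescaled path onto the polynomial basis $(1,\, t_T,\, \hdots,\, t_T^{\, r})$, which in the limit amounts to the orthogonal projection in $L^2[0,1]$ onto $\mathrm{span}(1, s, \hdots, s^r)$.

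For $\rho = 1$, Donsker yields $T^{-1/2} S_{[T\tau]}^{\eta} \Rightarrow \sigma_{\eta} W(\tau)$. Since $(\veps_t)$ is stochastically of lower order and $\check{\theta}_T \limp \theta$, the rescaled residuals $T^{-1/2} \whveps_{[T\tau]}$ inherit this limit after the polynomial projection, giving $\sigma_{\eta} W_{r, 0}(\tau)$ when $\kappa \neq 0$ and $\sigma_{\eta} W(\tau)$ when $\kappa = 0$. A further partial summation, combined with the continuity of $f \mapsto \int_{0}^{\cdot} f$, produces $T^{-3/2} S_{[T\tau]} \Rightarrow \sigma_{\eta} C_{r, 1}(\tau)$ and $\sigma_{\eta} W^{(1)}(\tau)$, respectively. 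Writing
\begin{equation*}
\frac{\whk}{T} \, = \, \frac{(1/T) \sum_{t=1}^{T} \bigl( S_t / T^{3/2} \bigr)^{2}}{Q_T / T^{2}},
\end{equation*}
the numerator converges in distribution to $\sige \int_{0}^{1} C_{r, 1}^{\, 2}(s) \dd s$ (or $\sige \int_{0}^{1} W^{(1)\, 2}(s) \dd s$) while the denominator converges to $\sige \int_{0}^{1} W_{r, 0}^{\, 2}(s) \dd s$ (or $\sige \int_{0}^{1} W^{\, 2}(s) \dd s$). Joint convergence and the continuous mapping theorem, valid because the denominator is almost surely positive, yield the announced ratios after cancellation of $\sige$.

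The case $\rho = -1$ is driven by a different mechanism. Now $(S_t^{\eta})$ oscillates, and its cumulation $U_t = \sum_{k=1}^{t} S_k^{\eta}$ satisfies the recurrence $U_t + U_{t-1} = \sum_{k=1}^{t} \eta_k$; separating the indices whose parity matches that of $t$ gives $U_t = \sum_{j \leq t,\, j \equiv t \, (2)} \eta_j$, so $U_t = O_{\dP}(\sqrt{t})$ instead of the $O_{\dP}(t^{3/2})$ growth obtained at $\rho = 1$. The contribution of $(\veps_t)$ to $S_t$ is $O_{\dP}(\sqrt{t})$ as well, and a bookkeeping of the detrending error $\wh{\alpha} - \alpha = O_{\dP}(T^{-1/2})$ shows that its cumulated effect is again $O_{\dP}(\sqrt{T})$. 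Hence $\sup_{t \leq T} |S_t| = O_{\dP}(\sqrt{T})$ and $\sum_{t=1}^{T} S_t^{\, 2} = O_{\dP}(T^{2})$, whereas $\dE[(S_t^{\eta})^{2}] = \lceil t/2 \rceil \sige$ together with a direct variance computation gives $Q_T = \tfrac{1}{2} \sige T^{2} + o_{\dP}(T^{2})$. Combining these rates, $\whk = O_{\dP}(T^{-1})$ and therefore $\whk \limp 0$, regardless of $\kappa$.

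The main obstacle is not the Donsker/CMT machinery, which is standard, but the MLE step underpinning the residual approximation: $(\check{\theta}_T, \check{\beta}_T)$ has to be proved consistent both at the non-invertible boundary $|\beta| = 1$ of $\cHz$ and in the ``wrong-differentiation'' regime $\rho = -1$, and the resulting error $\check{Y}_t - \cA(L) Y_t$ must be shown to be negligible at the rate $T^{-1/2}$ so that the dominant random-walk component $S_t^{\eta}$ truly governs the limit. Once this is in hand, the three convergences follow from the rate arguments above.
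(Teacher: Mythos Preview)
Your treatment of the $\rho=1$ case matches the paper's proof: Donsker's principle for $S^{\eta}_{[T\tau]}/\sqrt{T}$, Lemma~\ref{LemInvPRes} (or the equivalent $L^{2}$-projection argument you sketch) for the detrended residuals, and the continuous mapping theorem for the ratio. Nothing to add there.

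For $\rho=-1$ your recurrence $U_{t}+U_{t-1}=\sum_{k\le t}\eta_{k}$ is a neat shortcut. The paper instead decomposes $\sum_{t}t^{k-1}S_{t}^{\eta}$ into the $\rho=1$ cumulant $\Sigma^{\eta}$ and the even/odd partial sums $\Lambda^{\eta},\Pi^{\eta}$ (see \eqref{DecompStH1b}--\eqref{DecompStH1bPIDReste}), obtaining only the coarser bound $P_{k,T}=O_{\cP}(T^{3/2})$ and hence $\wh{\alpha}_{T}-\alpha=O_{\cP}(T^{1/2})$. Your direct argument, if carried out carefully (write $\sum_{t}t^{k-1}S_{t}^{\eta}=\sum_{j}c_{j}\eta_{j}$ with $|c_{j}|=O(T^{k-1})$), actually gives the sharper $\wh{\alpha}_{T}-\alpha=O_{\cP}(T^{-1/2})$, so the cumulated detrending contribution to $S_{t}$ is indeed $O_{\cP}(\sqrt{T})$ as you claim. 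That part is fine, and arguably cleaner than the paper.

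Two genuine slips, however, in your denominator analysis. First, $\dE[(S_{t}^{\eta})^{2}]=t\,\sige$, not $\lceil t/2\rceil\,\sige$: the alternating signs in $S_{t}^{\eta}=\sum_{k\le t}(-1)^{t-k}\eta_{k}$ do not reduce the number of summands, only their signs, and the variance is unaffected. You may be conflating $S_{t}^{\eta}$ with $U_{t}$, which does involve only half of the $\eta$'s. Second, and more importantly, $Q_{T}/T^{2}$ does \emph{not} converge in probability to the constant $\tfrac{1}{2}\sige$: writing $S_{t}^{\eta}=(-1)^{t}\tilde{S}_{t}$ with $\tilde{S}_{t}=\sum_{k\le t}(-1)^{k}\eta_{k}$ a genuine random walk, one gets $\sum_{t}(S_{t}^{\eta})^{2}/(\sige T^{2})\Rightarrow\int_{0}^{1}W^{2}(s)\dd s$, a nondegenerate limit (this is exactly what underlies Proposition~\ref{PropStatH1b}). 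Your ``direct variance computation'' cannot produce $o_{\cP}(T^{2})$ fluctuations here. The fix is easy: you only need $Q_{T}/T^{2}$ tight and a.s.\ positive in the limit, which the invariance principle for $\tilde{S}_{[T\tau]}/\sqrt{T}$ delivers; then $\whk=O_{\cP}(T^{2})/(T\cdot\Theta_{\cP}(T^{2}))=O_{\cP}(T^{-1})\limp 0$ as you conclude.

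Finally, your worry about MLE consistency ``in the wrong-differentiation regime $\rho=-1$'' is off target: model~\eqref{ModSingleDiff} uses the correct factor $(1-\rho L)$, so under $\cHa$ one always lands in the invertible ARMA case and classical consistency applies; the boundary issue $|\beta|=1$ arises only under $\cHz$ and is handled by Lemma~\ref{LemARMA}.
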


\medskip

The situation where $\rho=-1$ is the cause of a number of complications as we will see in the associated proofs, that is the reason why we limit ourselves to stipulate the convergence of $\whk$ to zero in the general case. However, in the particular case where $\kappa = 0$, we reach the following result.

\begin{prop}
\label{PropStatH1b}
Assume that $\sigma_{\eta}^2 > 0$. Then, for $\kappa = 0$ and $\rho=-1$, we have the weak convergence
\begin{equation*}
T\, \whk \liml \frac{2 \sigeps \int_0^1 W_{\veps}^{\, 2}(s) \dd s + \sige \int_0^1 W_{\eta}^{\, 2}(s)}{2 \sige \int_0^1 W_{\eta}^{\, 2}(s)}
\end{equation*}
where $(W_{\veps}(t),\, t \in [0,1])$ and $(W_{\eta}(t),\, t \in [0,1])$ are independent standard Wiener processes.
\end{prop}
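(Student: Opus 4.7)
The plan is to exploit the explicit residual representation available under $\cHa$ with $\rho = -1$ and $\kappa = 0$, then to derive the limits of the numerator $\sum_{t=1}^T S_t^{\, 2}$ and the denominator $T Q_T$ of $T \whk$ separately via Donsker's invariance principle together with the continuous mapping theorem, and finally to take the ratio. First I would invoke the consistency of the maximum likelihood estimator $(\check{\theta}_T, \check{\beta}_T)$ established in the proofs of Theorems \ref{ThmStatH0}--\ref{ThmStatH1}. Since $\kappa = 0$ removes any trend correction, this allows replacing $\whveps_t$ by $\cA(L)Y_t = S_t^{\eta} + \veps_t$ up to a uniformly negligible error. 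The structural feature distinguishing $\rho = -1$ is the identity $S_t^{\eta} = \sum_{k=1}^t (-1)^{t-k}\eta_k = (-1)^t U_t$, where $U_t := \sum_{k=1}^t (-1)^k \eta_k$ is a standard random walk with iid increments of variance $\sige$. Donsker's theorem thus yields $U_{[T\tau]}/\sqrt{T} \liml \sqrt{\sige}\, W_\eta(\tau)$ jointly with $B_{[T\tau]}^\veps/\sqrt{T} \liml \sqrt{\sigeps}\, W_\veps(\tau)$, where $B_t^\veps := \sum_{k=1}^t \veps_k$ and $W_\eta, W_\veps$ are independent standard Wiener processes.

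For the denominator, expanding $\whveps_t^{\, 2} = (S_t^{\eta})^2 + 2 S_t^{\eta}\veps_t + \veps_t^{\, 2}$, the term $\sum U_t^2$ dominates: the cross martingale $\sum S_t^{\eta}\veps_t$ has variance of order $T^2$ and is thus $O_p(T)$, and $\sum \veps_t^{\, 2} = O(T)$ by the strong law. Continuous mapping then gives $Q_T/T^2 \liml \sige \int_0^1 W_\eta^{\, 2}(s)\dd s$. For the numerator, writing $S_t \approx A_t^{\eta} + B_t^{\veps}$ with $A_t^{\eta} := \sum_{k=1}^t S_k^{\eta}$, the sign alternation telescopes $A_t^{\eta}$ down to $\sum_{k\le t,\; t-k \text{ even}} \eta_k$, a sum of only $\lceil t/2 \rceil$ independent terms, so $A_t^{\eta} = O_p(\sqrt{t})$ instead of the $O_p(t^{3/2})$ that would hold for $\rho = 1$. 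This cancellation is exactly what lowers the divergence rate of $\whk$ by a full factor of $T$ compared with Theorem \ref{ThmStatH1}. The contribution $\sum(B_t^\veps)^2/T^2 \liml \sigeps \int_0^1 W_\veps^{\, 2}(s)\dd s$ is standard, while $\sum_t (A_t^{\eta})^2$ splits along the parity of $t$ into $\sum_m X_m^2 + \sum_m Y_m^2$ with $X_m := \sum_{i=1}^m \eta_{2i}$ and $Y_m := \sum_{i=0}^m \eta_{2i+1}$ independent random walks; Donsker's theorem applied to each, together with the linear identities $U_{2m} = X_m - Y_{m-1}$ and $V_{2m} = X_m + Y_{m-1}$ where $V_t := \sum_{k=1}^t \eta_k$, re-expresses the limit in terms of $W_\eta$ and produces the contribution $\tfrac{1}{2} \sige \int_0^1 W_\eta^{\, 2}(s)\dd s$. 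The mixed term $\sum_t A_t^{\eta} B_t^{\veps}$ must then be shown to be $o_p(T^2)$, using the independence of $(\eta_k)$ and $(\veps_k)$ together with the cancellation $\dE[A_s^{\eta}A_t^{\eta}] = 0$ whenever $t - s$ is odd.

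The main obstacle is the joint identification step: one must ensure that the $W_\eta$ naturally arising from the alternating random walk $U_t$ in the denominator coincides with the one obtained, after suitable linear recombination, from the even and odd sub-walks $X_m$ and $Y_m$ appearing in $\sum_t (A_t^{\eta})^2$, and that the cross term between $A^{\eta}$ and $B^\veps$ is asymptotically negligible at the $T^{-2}$ scale. Once this is done, the continuous mapping theorem applied to the pair $(T^{-2}\sum_t S_t^{\, 2},\, T^{-2} Q_T)$ delivers the ratio stated in the proposition after clearing the common factor $2$ in the numerator and denominator.
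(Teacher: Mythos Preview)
Your route differs from the paper's and contains a genuine gap. The paper never separates $S_t$ into $A_t^{\eta}+B_t^{\veps}$; instead, in the proof of Theorem~\ref{ThmStatH1} it observes that $\sum_{k=1}^{2m}(S_k^{\eta}+\veps_k)=\sum_{j=1}^{m}\zeta_j$ with $\zeta_j=\veps_{2j-1}+\veps_{2j}+\eta_{2j}$, a white noise of variance $2\sigeps+\sige$, and applies Donsker once to get $T^{-2}\sum_t S_t^{\,2}\liml\tfrac{2\sigeps+\sige}{2}\int_0^1 W^{2}(s)\dd s$, after which the limiting Brownian motion is split into its $\veps$-- and $\eta$--components. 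The point is that the two noises are kept together throughout the invariance principle for the numerator.

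The obstacle you flag at the end is not a technicality to be cleaned up but an actual failure of your decomposition. The mixed term is \emph{not} $o_p(T^2)$: with $N=T/2$,
\begin{equation*}
\frac{1}{T^2}\sum_{m=1}^{N} X_m\, B_{2m}^{\veps}\;=\;\frac{1}{2N}\sum_{m=1}^{N} \frac{X_m}{\sqrt{T}}\,\frac{B_{2m}^{\veps}}{\sqrt{T}}\ \liml\ \frac{\sigma_{\veps}\sigma_{\eta}}{2\sqrt{2}}\int_0^1 W_{X}(s)\,W_{\veps}(s)\dd s,
\end{equation*}
which is almost surely nonzero; the parity cancellation $\dE[A_s^{\eta}A_t^{\eta}]=0$ for $t-s$ odd removes only half of the covariance pairs and the surviving ones contribute at order $T^4$ to the variance of $\sum_t A_t^{\eta}B_t^{\veps}$. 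Likewise, your recombination of $X,Y$ into $W_{\eta}$ does not yield $\tfrac{\sige}{2}\int W_{\eta}^{2}$: from $X_m^{2}+Y_m^{2}=\tfrac12(U_{2m}^{2}+V_{2m}^{2})$ one obtains $T^{-2}\sum_t(A_t^{\eta})^{2}\liml\tfrac{\sige}{4}\int_0^1\bigl(W_{\eta}^{2}(s)+W_{V}^{2}(s)\bigr)\dd s$, where $W_{V}$ is the Donsker limit of the ordinary sums $V_t=\sum_{k\le t}\eta_k$ and is independent of $W_{\eta}$. Splitting $S_t$ therefore introduces both a nonvanishing cross term and an extra Brownian component that cannot be absorbed into the same $W_{\eta}$ as in the denominator; the paper's device of working with the combined white noise $(\zeta_t)$ is precisely what sidesteps this.
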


\medskip

One can notice that this is the only situation in which $(\veps_{t})$ and $(\eta_{t})$ simultaneously play a role in the asymptotic behavior, this explains why we had to make such a decomposition into $W_{\veps}(t)$ and $W_{\eta}(t)$. As a matter of fact, under $\cH_0$, $(\veps_{t})$ is the only perturbating process whereas under $\cH_1$ with $\rho=1$, $(\veps_{t})$ is dominated by $(\eta_{t})$. We are pretty convinced, on the basis of a simulation study, that it is possible to find an identifiable limiting distribution for $T \whk$ when $\kappa \neq 0$ and $\rho=-1$. However, we have not reached the explicit expression in this work because of complications due to the phenomenon of compensation in the invariance principles, and calculations very hard to conduct. This could form an objective for a future study.

\begin{proof}
Theorems \ref{ThmStatH0}--\ref{ThmStatH1} and Proposition \ref{PropStatH1b} are proved in Section \ref{SecProof}.
\end{proof}

\begin{rem}
It is also possible to extend the whole results to the multi-integrated processes under the alternative, such as ARI processes having more than one unit root. In model \eqref{ModSingle}, the random walk $(S_{t}^{\eta})$ is now itself generated by a random walk, and so on up to $d \geq 0$ positive unit roots. Then, weak convergences in Theorem \ref{ThmStatH1} become
\begin{equation*}
\frac{\whk}{T} \liml \frac{\int_0^1 C_{r,\, d}^{\, 2}(s) \dd s}{\int_0^1 W_{r,\, d-1}^{\, 2}(s) \dd s} \hsp \text{and} \hsp \frac{\whk}{T} \liml \frac{\int_0^1 W^{(d)\, 2}(s) \dd s}{\int_0^1 W^{(d-1)\, 2}(s) \dd s},
\end{equation*}
respectively for $\kappa \neq 0$ and $\kappa = 0$. For $d \geq 1$ negative unit roots, we still reach the convergence
\begin{equation*}
\whk \limp 0.
\end{equation*}
Such results may be useful to produce a statistical testing procedure concerning the integration order $d$ of the generating process of an observed path and/or to check the true value of $r$.
\end{rem}

On Figure \ref{FigDistr}, we have represented the asymptotic distribution of $\whk$ under $\cH_0$ for $\kappa=0$, then for $\kappa \neq 0$ and $r \in \{0, \hdots, 4\}$, using Monte-Carlo experiments.

\begin{figure}[h!]
\centering
\includegraphics[width=14cm]{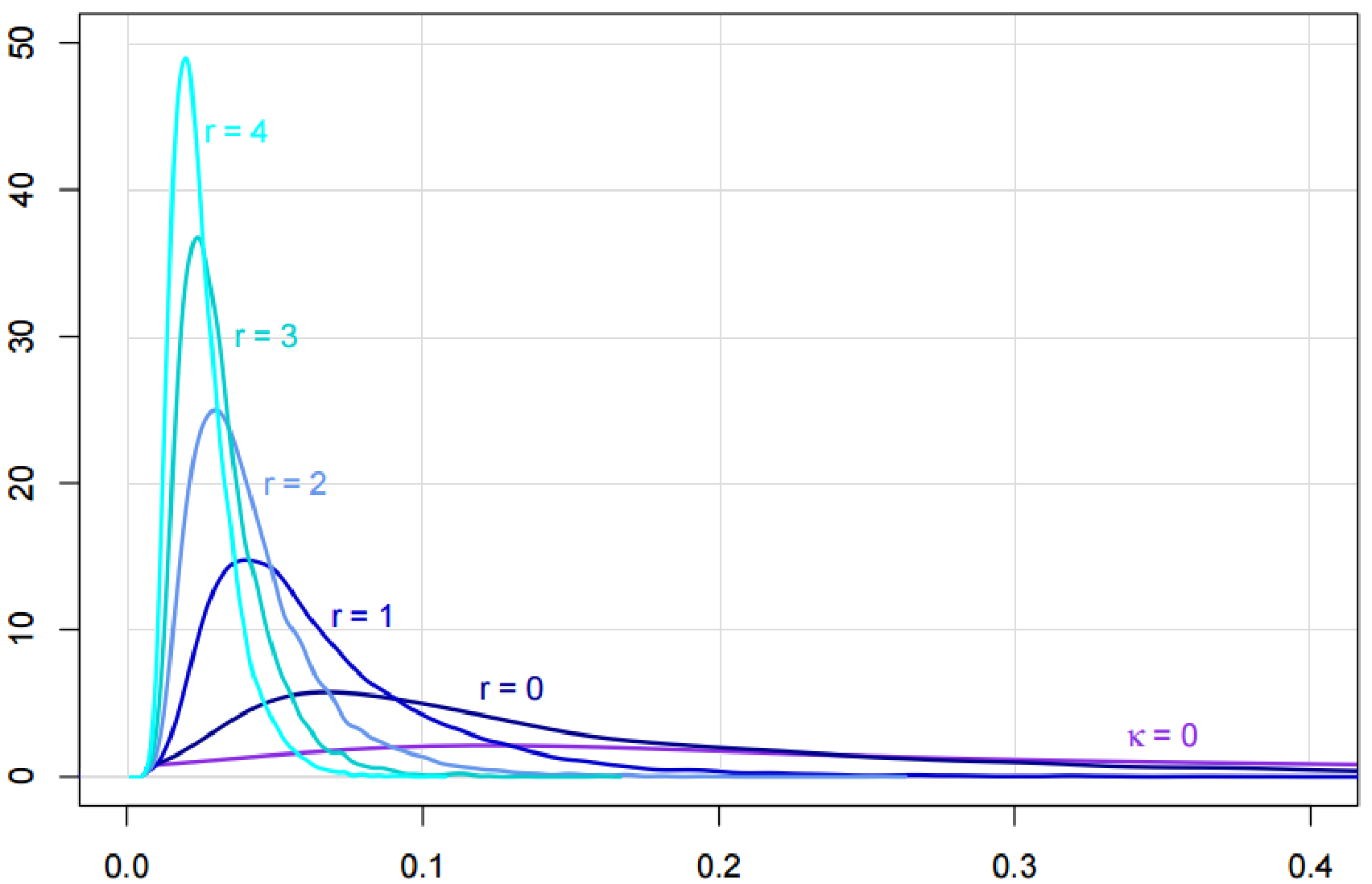}
\caption{\small Asymptotic distribution of $\whk$ under $\cH_0$ for $\kappa=0$, then for $\kappa \neq 0$ and $r \in \{0, \hdots, 4\}$, using Monte-Carlo experiments.}
\label{FigDistr}
\end{figure}


\section{Some useful stochastic processes}
\label{SecProc}


Throughout the study, we deal with some stochastic processes, built from the standard Wiener process $(W(t),\, t \in [0,1])$ that we are now going to introduce. In all definitions, we consider that $d,r \in \dN$.
\begin{defi}[Integrated Wiener Process]
\label{DefIWP}
The process given, for $t \in [0,1]$, by
\begin{equation*}
W^{(d)}(t) = \int_0^{t} \int_0^{s_{1}} \hdots \int_0^{s_{d-1}} W(s_{d}) \dd s_{d} \hdots \dd s_{1}
\end{equation*}
is called a ``integrated Wiener process of order $d$" in the whole paper. By convention, $W^{(0)}(t) \equiv W(t)$.
\end{defi}

\noindent For example,
\begin{equation*}
W^{(1)}(t) = \int_{0}^{t} W(s) \dd s \hsp \text{and} \hsp W^{(2)}(t) = \int_{0}^{t} \int_{0}^{s} W(u) \dd u \dd s.
\end{equation*}

\begin{defi}[Generalized Brownian Bridge]
\label{DefGBB}
The process given, for $t \in [0,1]$, by
\begin{equation*}
B_{r}(t) = h_{r}(W)(t)
\end{equation*}
where $h_{r}$ is an application from $C([0,1])$ into $C([0,1])$ given by formula (8) in \cite{MacNeill78}, is called a ``generalized Brownian bridge of order $r$" in the whole paper.
\end{defi}

\begin{defi}[Integrated Brownian Bridge]
\label{DefIBB}
The process given, for $t \in [0,1]$, by
\begin{equation*}
C_{r,\, d}(t) = h_{r}(W^{(d)})(t)
\end{equation*}
is called a ``integrated Brownian bridge of order $r \times d$" in the whole paper. By convention, $C_{r,\, 0}(t) \equiv B_{r}(t)$.
\end{defi}

\begin{defi}[Detrended Wiener Process]
\label{DefDWP}
The process given, for $t \in [0,1]$, by
\begin{equation*}
W_{r,\, d}(t) = \frac{\dd C_{r,\, d+1}(t)}{\dd t}
\end{equation*}
is called a ``detrended Wiener process of order $r \times d$" in the whole paper. It is explicitly defined as
\begin{equation*}
W_{r,\, d}(t) = W^{(d)}(t) - P_{d}^{\, \prime}(1) M^{-1} \Lambda(t)
\end{equation*}
where the nonsingular matrix $M$ satisfies $M_{i j} = 1/(i+j-1)$ for all $1 \leq i,j \leq r+1$, $\Lambda(t) = \begin{pmatrix} 1 & t & \hdots & t^{r} \end{pmatrix}^{\prime}$, and where
\begin{equation}
\label{Pd}
P_{d}^{\, \prime}(t) = \begin{pmatrix} W^{(d)}(t) & \displaystyle \int_0^{t} s\, W^{(d-1)}(s) \dd s & \hdots & \displaystyle \int_0^{t} s^{r}\, W^{(d-1)}(s) \dd s \end{pmatrix}.
\end{equation}
\end{defi}

\noindent Let us illustrate these definitions on the standard cases $r = \{ 0,\, 1 \}$ and $d=0$. According to Definition \ref{DefGBB} and formula (8) in \cite{MacNeill78}, for $t \in [0,1]$,
\begin{equation*}
B_0(t) = h_{0}(W)(t) = W(t) - t W(1)
\end{equation*}
which is the usual ``Brownian bridge". It follows from Definitions \ref{DefIBB} and \ref{DefDWP} that
\begin{equation*}
C_{0,\, 1}(t) = h_{0}(W^{(1)})(t) = \int_0^{t} W(s) \dd s - t \int_0^1 W(s) \dd s
\end{equation*}
and that
\begin{equation*}
W_{0,\, 0}(t) = \frac{\dd C_{0,\, 1}(t)}{\dd t} = W(t) - \int_0^1 W(s) \dd s
\end{equation*}
which is the usual ``demeaned Wiener process". Similarly, for $r=1$,
\begin{equation*}
B_1(t) = h_{1}(W)(t) = W(t) + t (2 - 3t) W(1) - 6 t (1-t) \int_0^1 W(s) \dd s
\end{equation*}
is the ``second-level Brownian bridge", leading to
\begin{equation*}
C_{1,\, 1}(t) = \int_0^{t} W(s) \dd s + t (3t - 4) \int_0^1 W(s) \dd s + 6 t (1-t) \int_0^1 s\, W(s) \dd s.
\end{equation*}
Finally,
\begin{equation*}
W_{1,\, 0}(t) = \frac{\dd C_{1,\, 1}(t)}{\dd t} = W(t) + (6t - 4) \int_0^1 W(s) \dd s + ( 6 - 12 t) \int_0^1 s\, W(s) \dd s
\end{equation*}
is the standard ``detrended Wiener process".


\section{A corrected test adapted to the negative unit root}
\label{SecEmpi}


The empirical power of the KPSS and LMC procedures has been widely studied in the literature (see Section \ref{SecSingle} for references). For $\rho=1$, the 	improvements that we described in this paper (for any $r$ and $d$) are mainly theoretical. On the other hand, we thought useful to conduct an empirical study for $\rho=-1$, because in this case it is not only a matter of generalization but also a matter of \textit{correction} of the existing procedures. To motivate the study, we have represented on Figures \ref{FigExSimK0}--\ref{FigExSimR2} below some examples of simulations according to \eqref{ModSingleH1} under $\cH_0 : ``\sige = 0"$, under $\cH_1^{+} : ``\sige > 0 \text{ and } \rho=1"$ and under $\cH_1^{-} : ``\sige > 0 \text{ and } \rho=-1"$, using the configurations indicated in the captions. Clearly, a visual investigation is required to decide whether $\rho=1$ or $\rho=-1$ is the most likely alternative, which is a crucial point to stationarize the process. In the whole experiments, the quantiles of the limit distribution of the test statistic under the null, depending on $\kappa$ and $r$, have been taken from Table 2 of \cite{MacNeill78}.

\begin{figure}[h!]
\centering
\includegraphics[width=4.9cm]{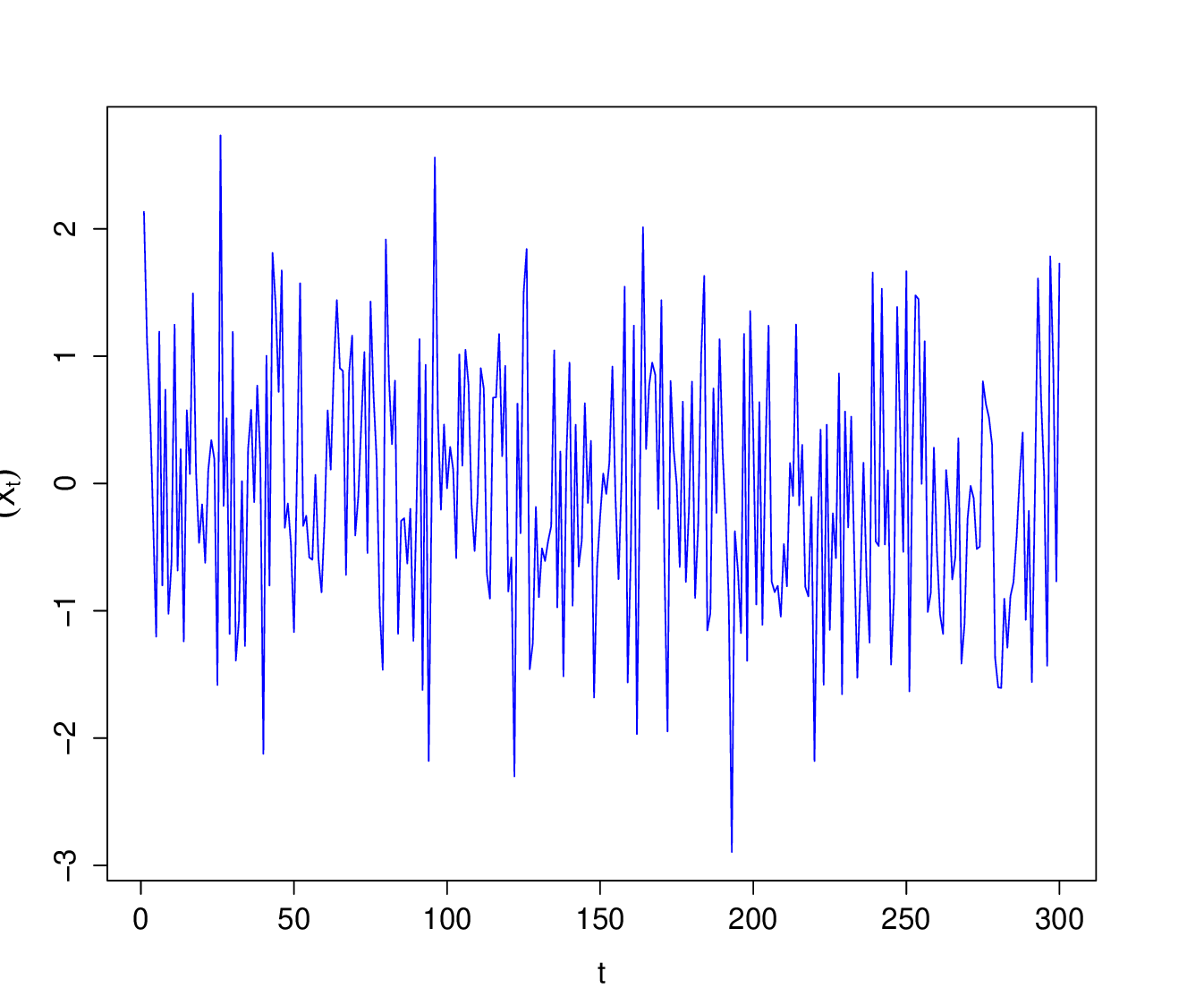}\! \includegraphics[width=4.9cm]{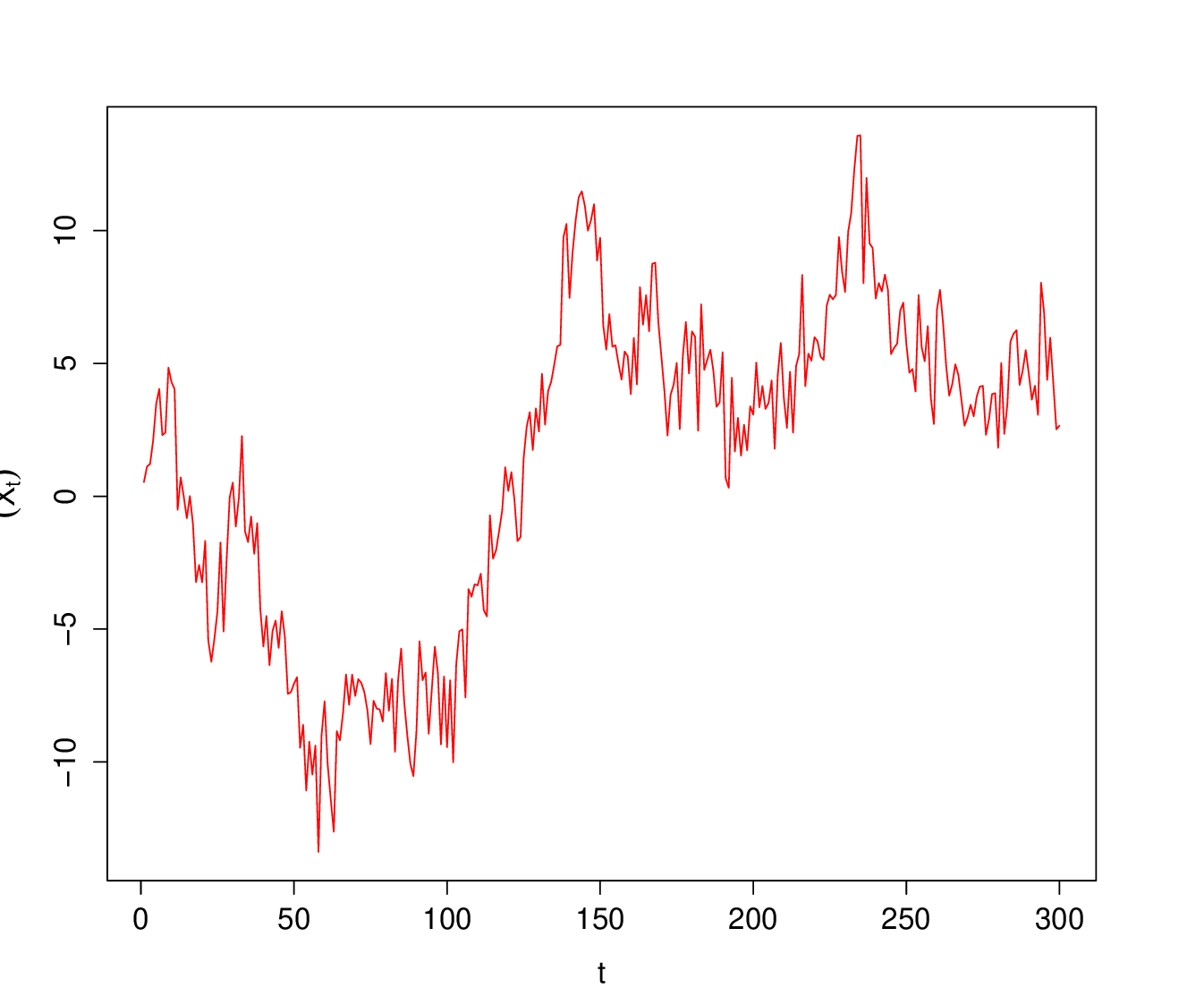}\! \includegraphics[width=4.9cm]{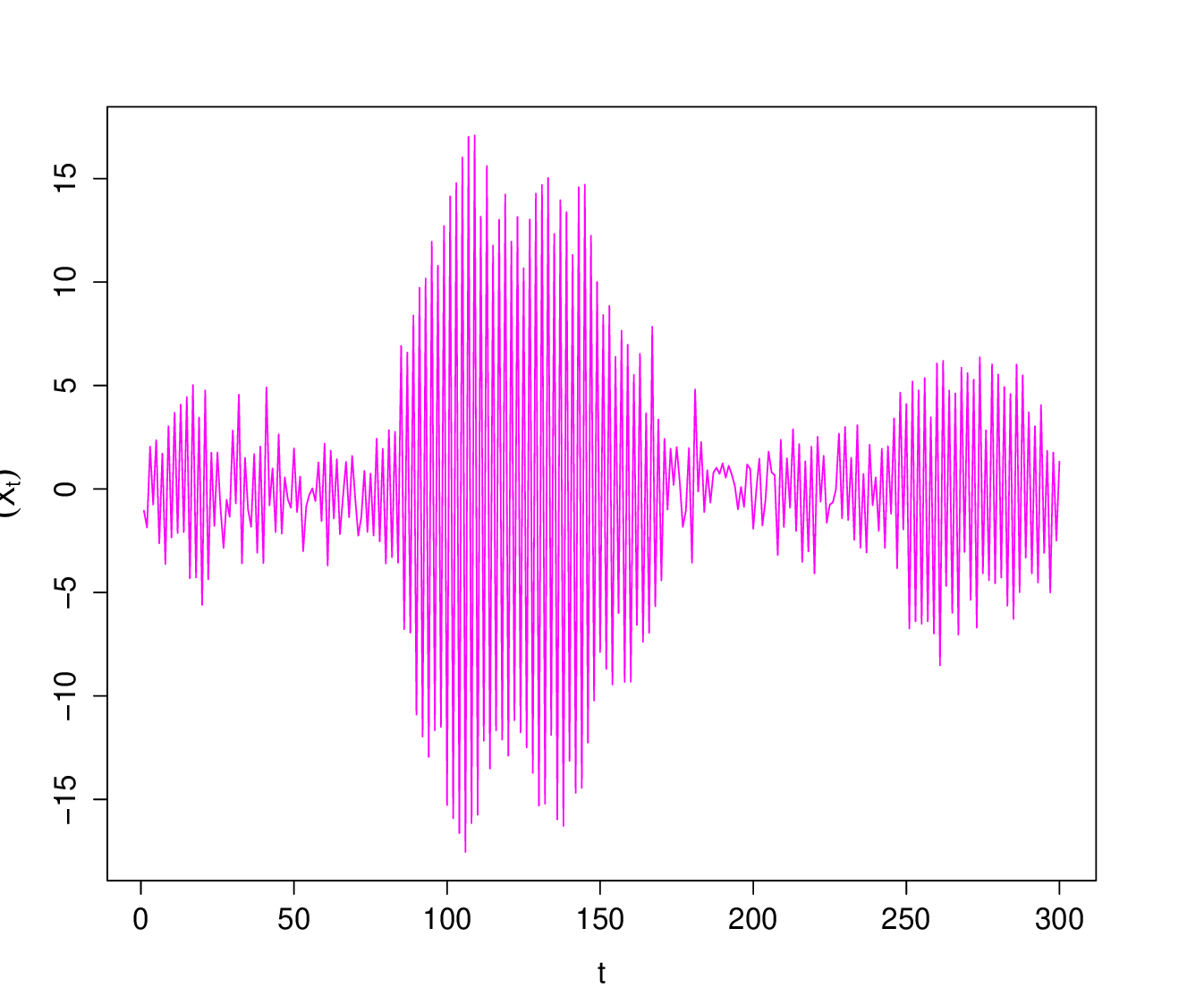}
\caption{\small Example of simulations under $\cH_0$ (left), under $\cH_1^{+}$ (middle) and under $\cH_1^{-}$ (right), for $T=300$, $p=0$, $\kappa=0$ and standard Gaussian white noises.}
\label{FigExSimK0}
\end{figure}

\begin{figure}[h!]
\centering
\includegraphics[width=4.9cm]{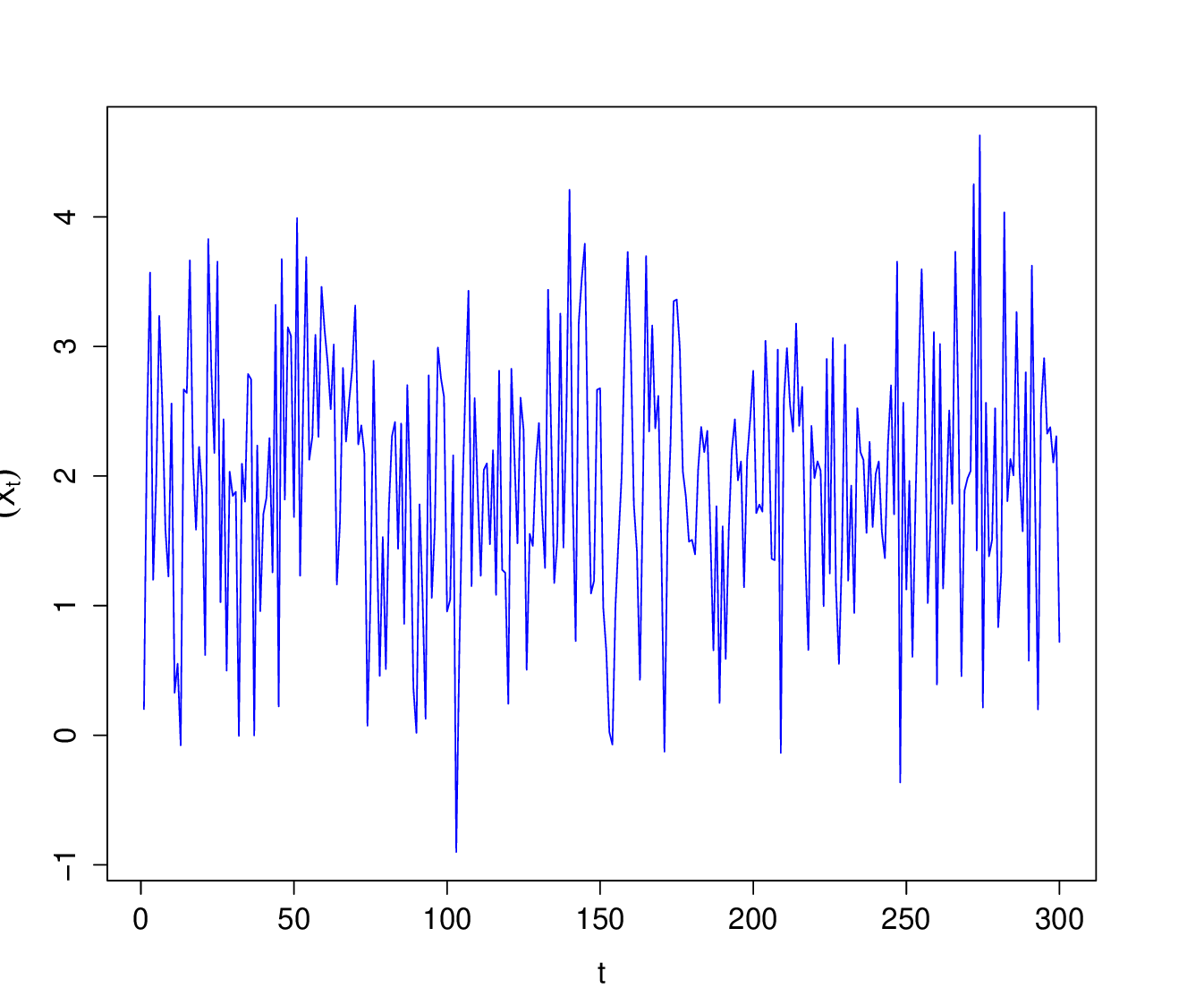}\! \includegraphics[width=4.9cm]{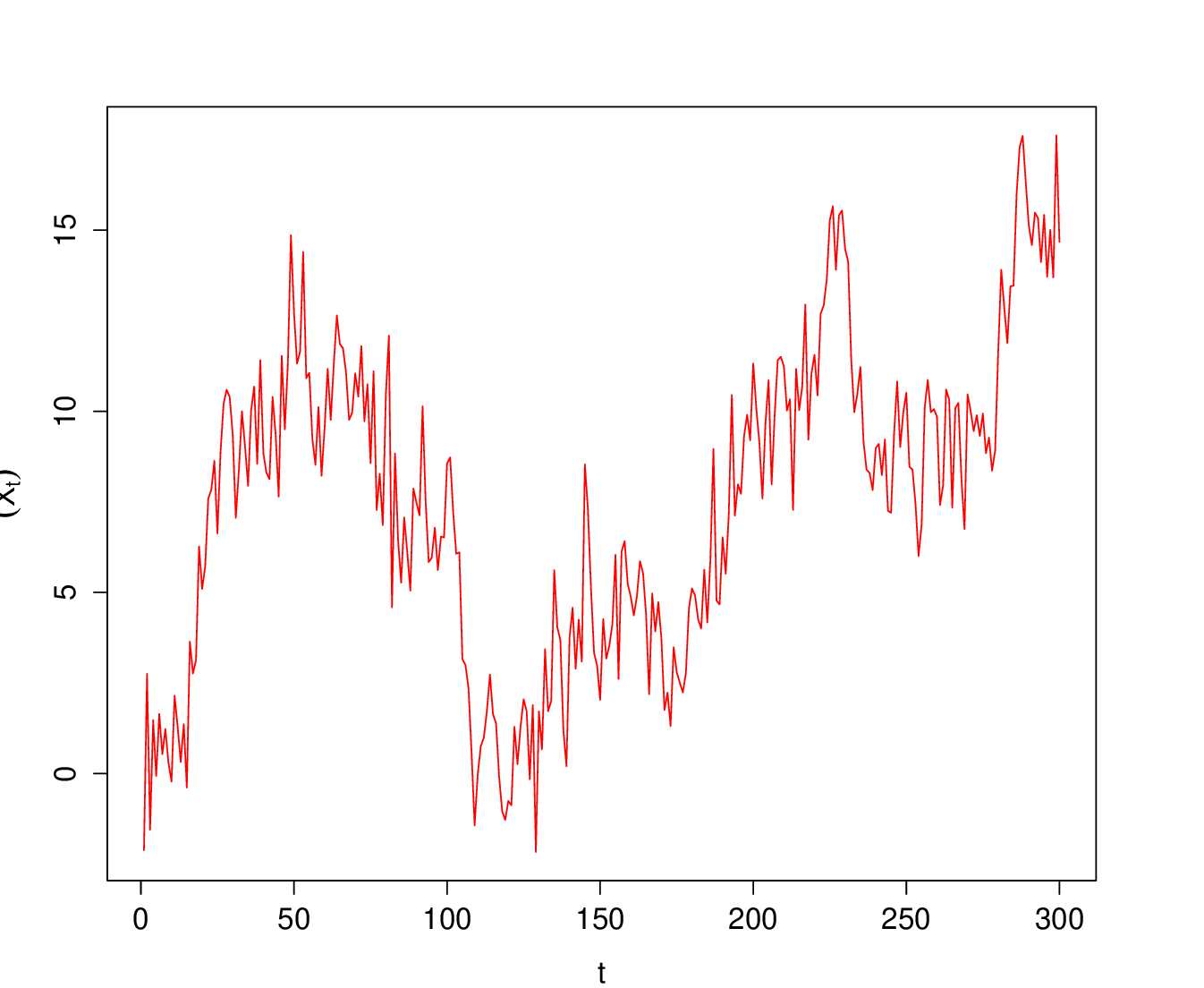}\! \includegraphics[width=4.9cm]{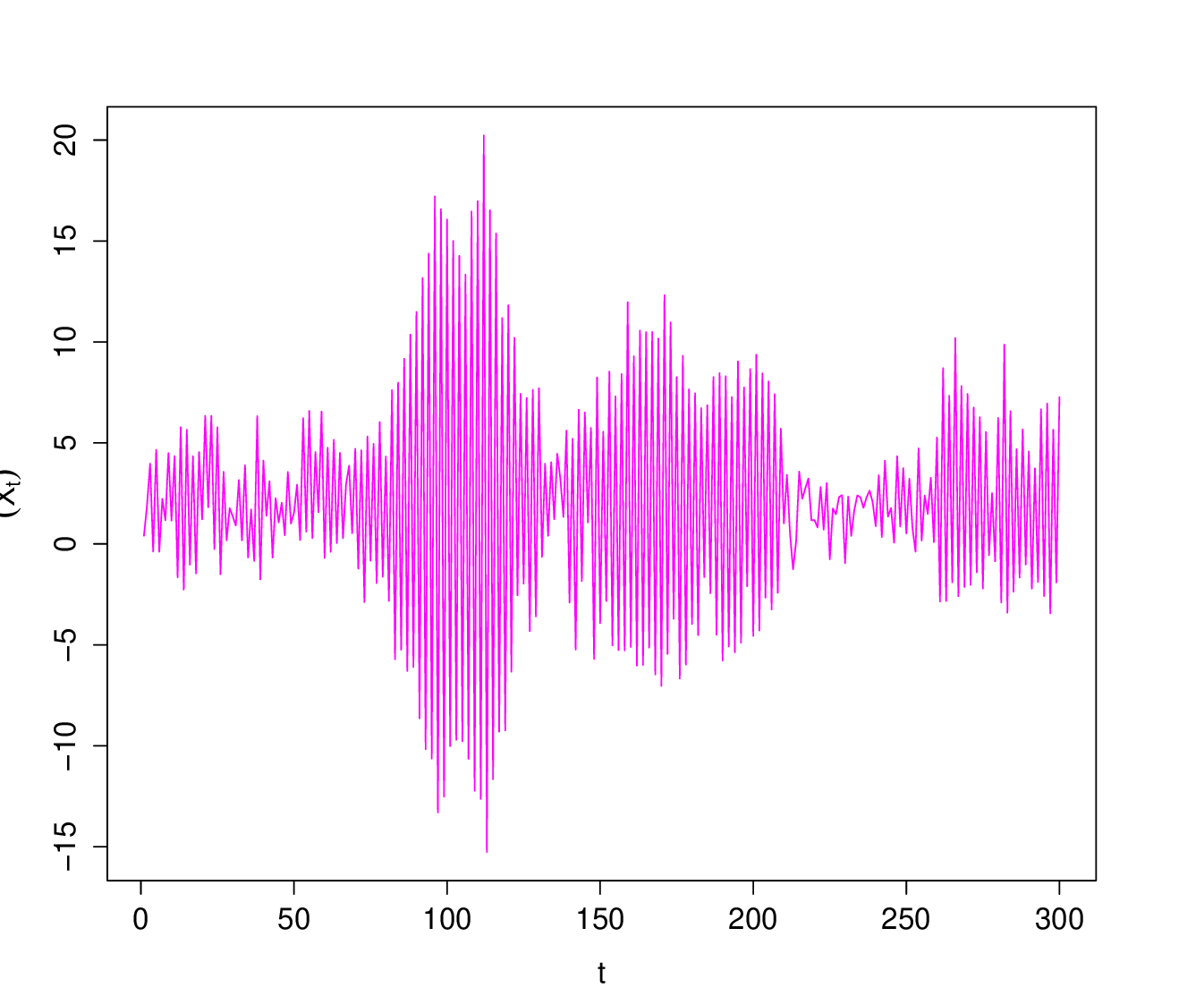}
\caption{\small Example of simulations under $\cH_0$ (left), under $\cH_1^{+}$ (middle) and under $\cH_1^{-}$ (right), for $T=300$, $p=0$, $\kappa \neq 0$, $r=0$ (with $a_0=2$) and standard Gaussian white noises.}
\label{FigExSimR0}
\end{figure}

\begin{figure}[h!]
\centering
\includegraphics[width=4.9cm]{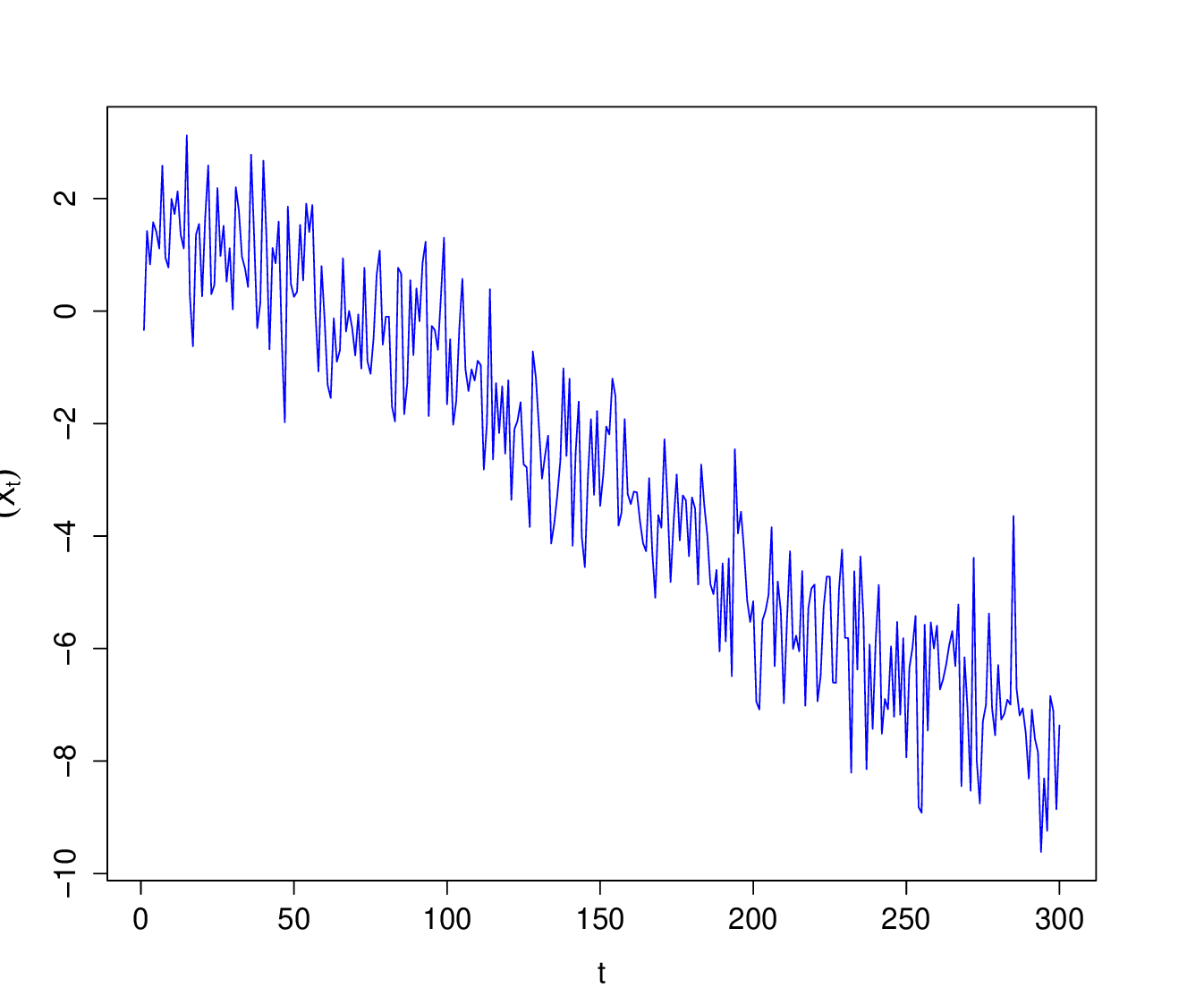}\! \includegraphics[width=4.9cm]{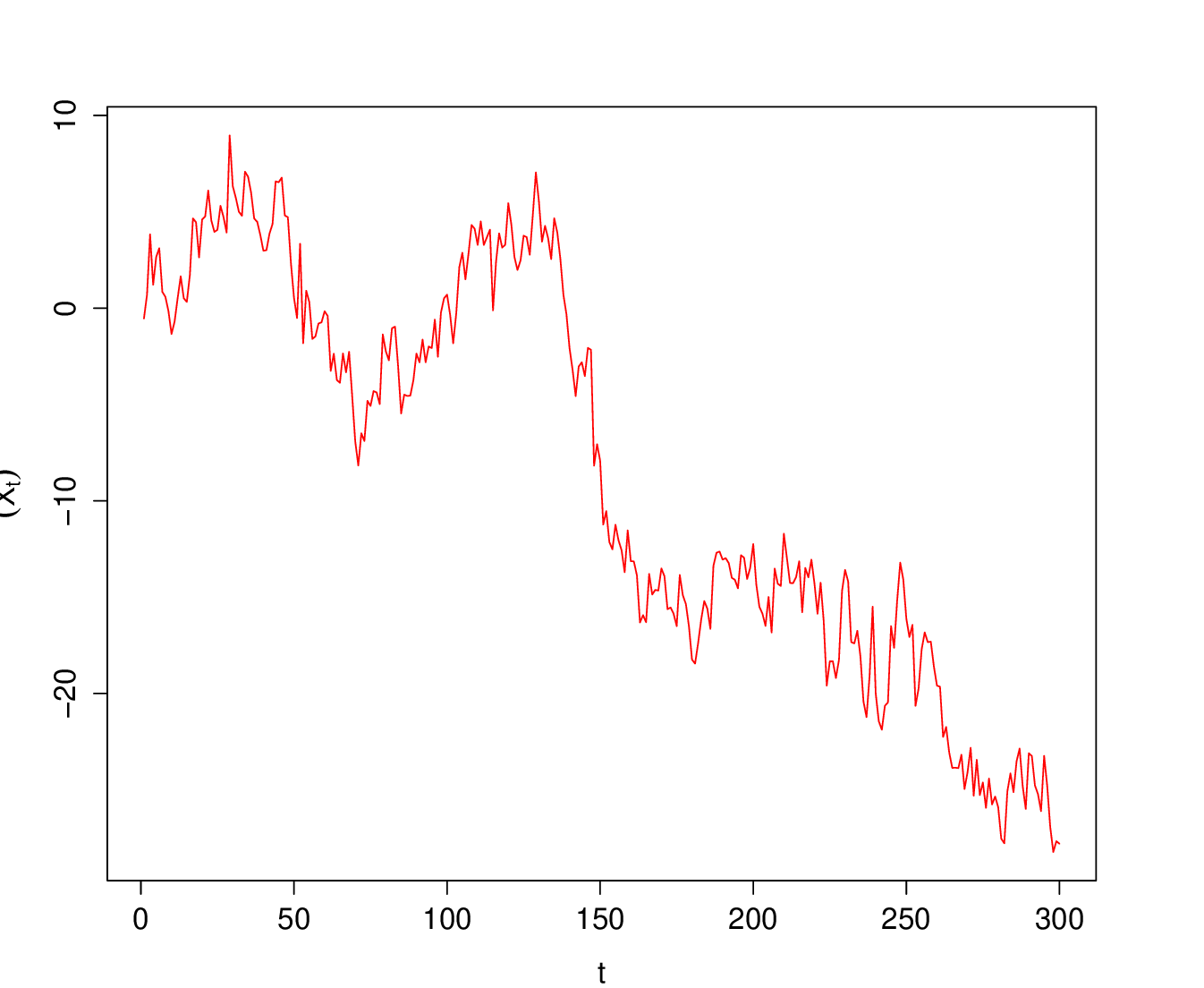}\! \includegraphics[width=4.9cm]{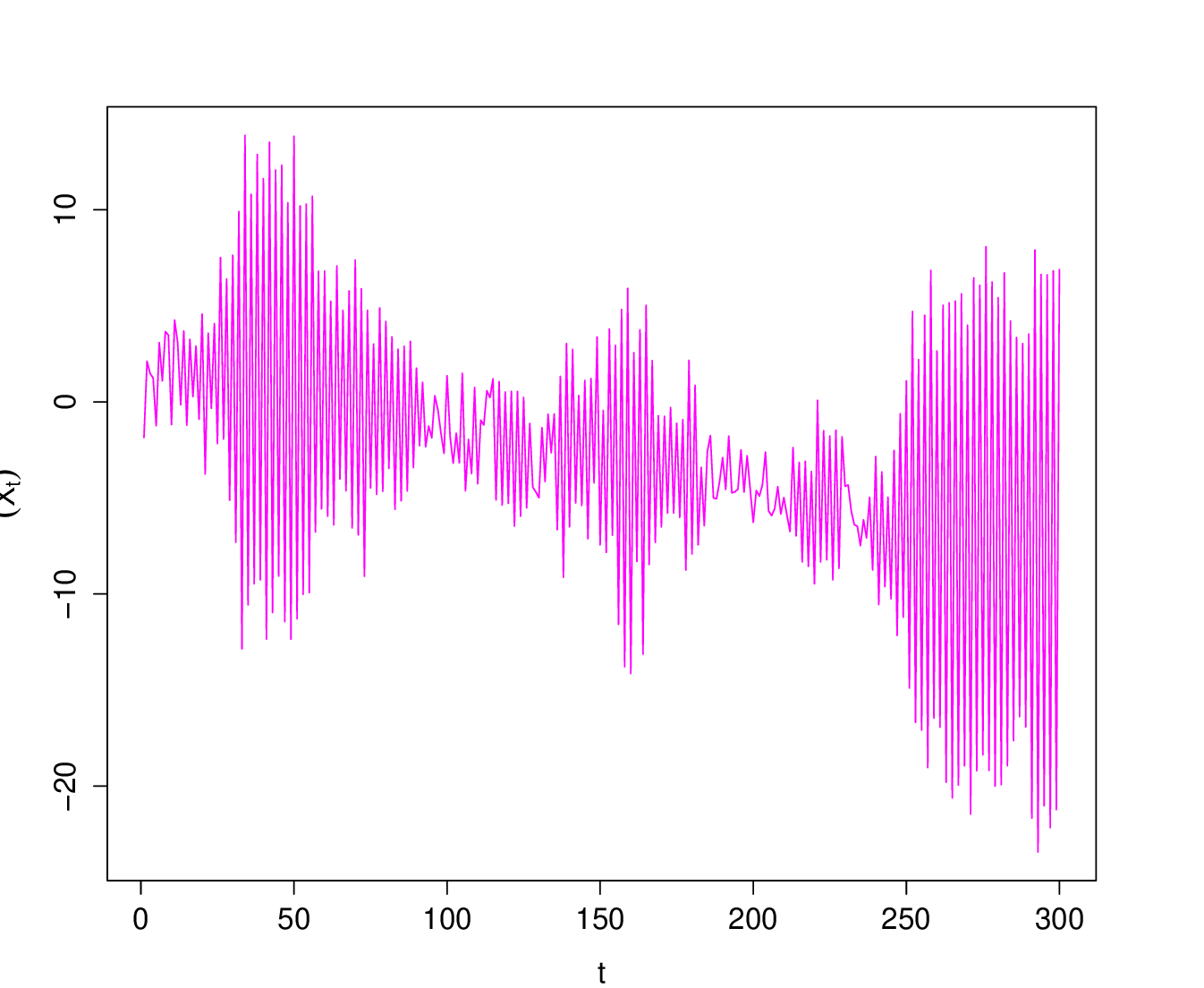}
\caption{\small Example of simulations under $\cH_0$ (left), under $\cH_1^{+}$ (middle) and under $\cH_1^{-}$ (right), for $T=300$, $p=0$, $\kappa \neq 0$, $r=1$ (with $a_0=2$ and $a_1=-10$) and standard Gaussian white noises.}
\label{FigExSimR1}
\end{figure}

\begin{figure}[h!]
\centering
\includegraphics[width=4.9cm]{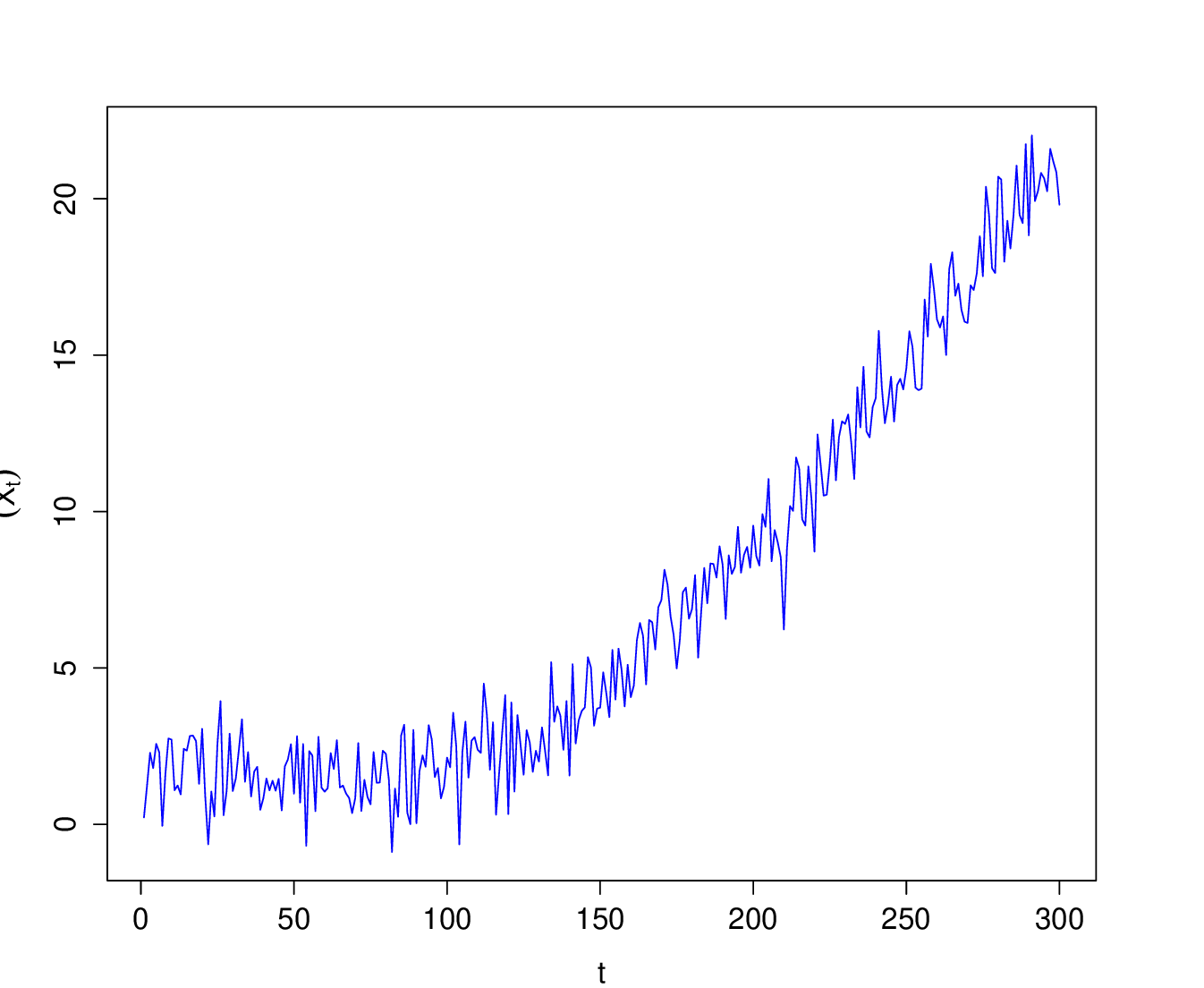}\! \includegraphics[width=4.9cm]{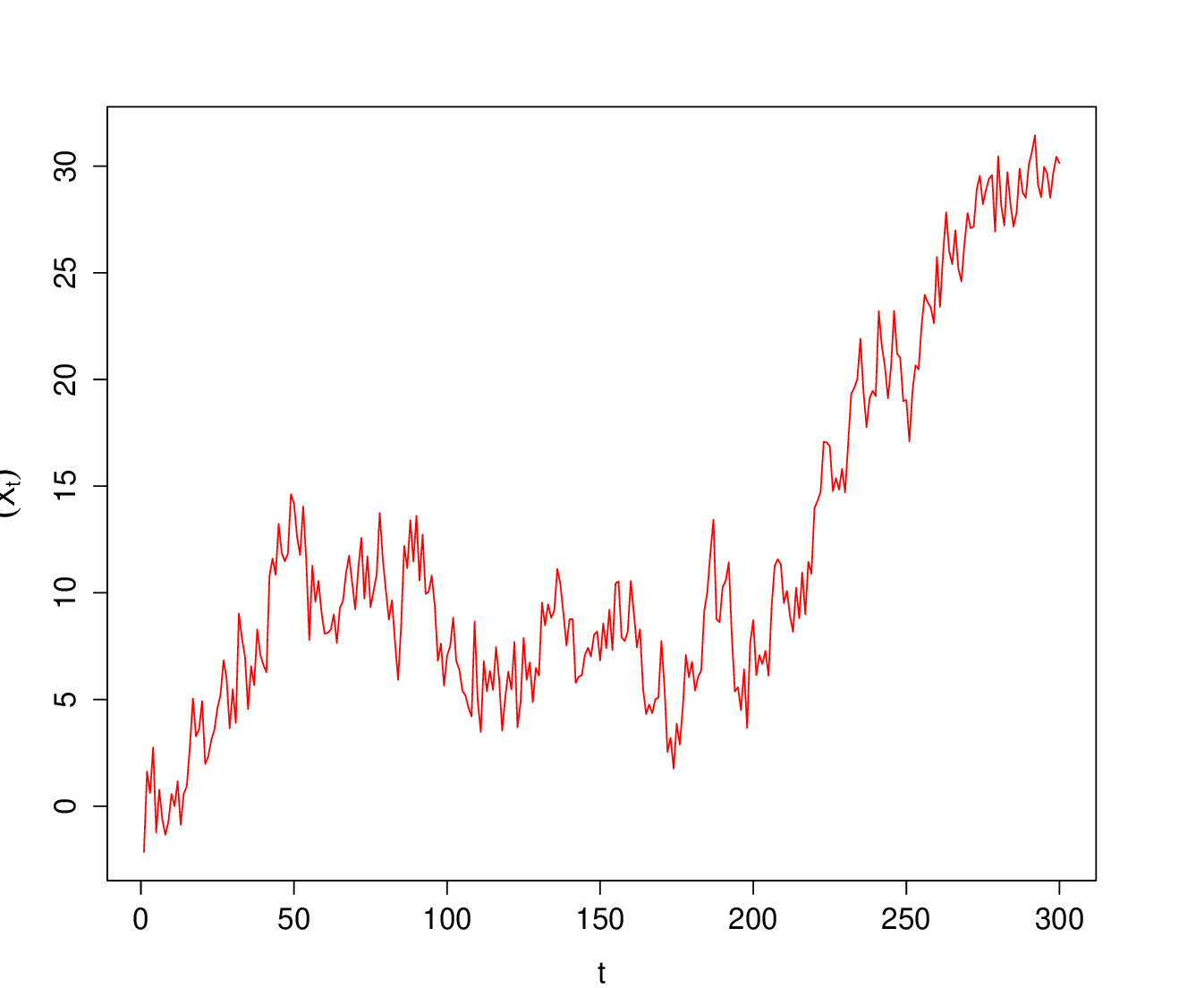}\! \includegraphics[width=4.9cm]{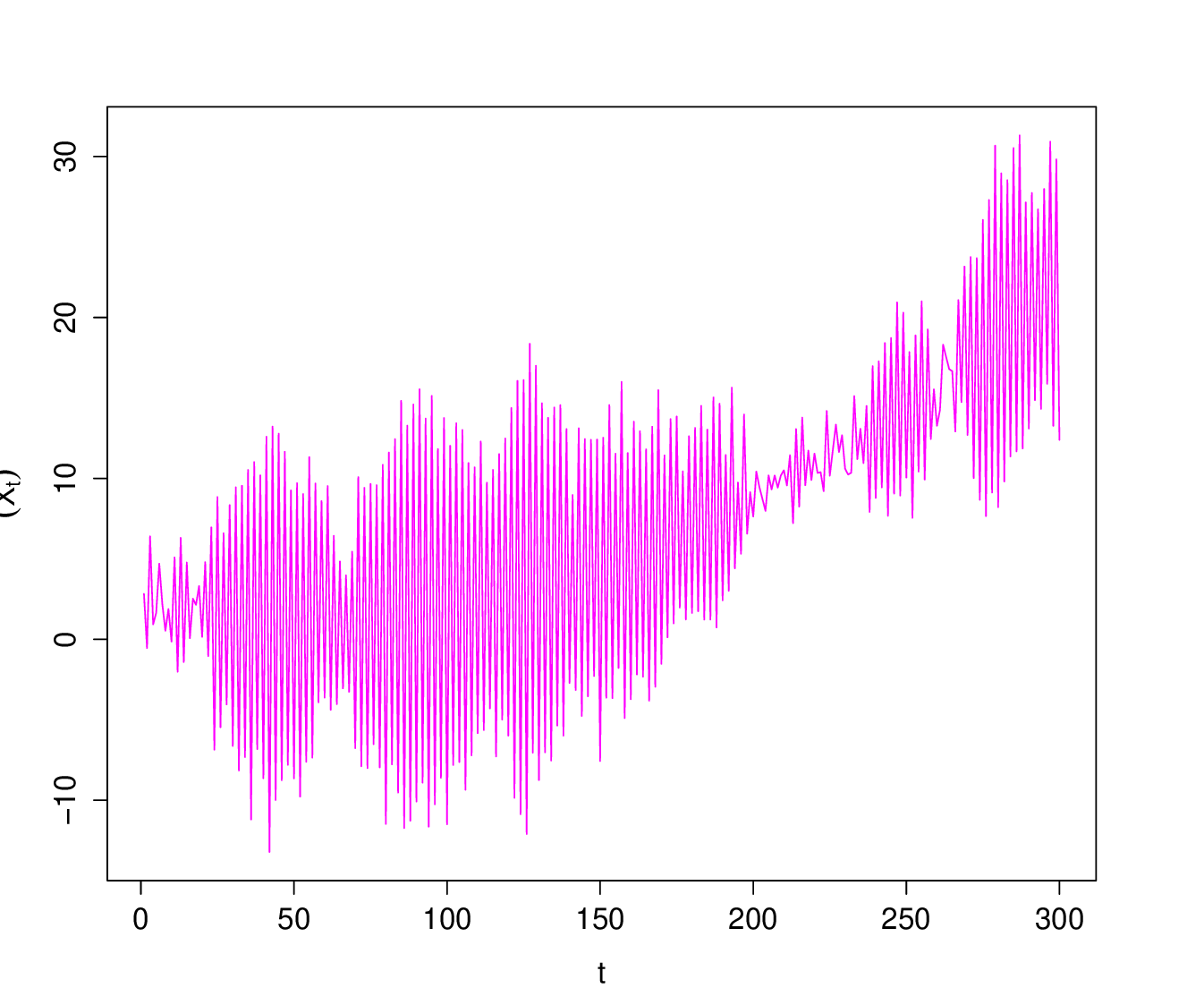}
\caption{\small Example of simulations under $\cH_0$ (left), under $\cH_1^{+}$ (middle) and under $\cH_1^{-}$ (right), for $T=300$, $p=0$, $\kappa \neq 0$, $r=2$ (with $a_0=2$, $a_1=-10$ and $a_2 = 30$) and standard Gaussian white noises.}
\label{FigExSimR2}
\end{figure}

\medskip

The first observation is that, due to the alternation generated by $\rho=-1$, it seems quite intuitive to choose between $\cH_1^{+}$ and $\cH_1^{-}$ to conduct the test. Besides, it is perceptible on the simulations that heteroscedasticity is manifest. Such high-frequency signals (under $\cH_1^{-}$) are quite unusual in the econometric field, and yet it remains a nonstationary eventuality that a consistent test needs to handle. In the particular case where $p=0$, $\kappa=0$ and where $(\veps_{t})$ and $(\eta_{t})$ are standard Gaussian white noises, we have conducted $N=10000$ simulations, each time testing for stationarity using the KPSS and the LMC procedures. We have obtained the following results (Table \ref{TabSim}). On the one hand, we observe that the size of each test is appropriate, since the procedures have been conducted with a significance level $\alpha=0.05$. One also observes that each test is consistent under $\cH_1^{+}$ but, as one can notice on Table \ref{TabSim} they are mislead under $\cH_1^{-}$ and do not detect this kind of nonstationarity.

\begin{table}[h!]
\begin{center}
\begin{tabular}{|c|c|c|}
\cline{2-3}
\multicolumn{1}{c|}{} & KPSS & LMC \\
\hline
Under $\cH_0$ & \small{0.051} & \small{0.051} \\
\hline
Under $\cH_1^{+}$ & \small{0.989} & \small{0.998} \\
\hline
Under $\cH_1^{-}$ & \small{0.043} & \small{0.010} \\
\hline
\end{tabular}
\bigskip
\end{center}
\caption{\small Frequency of rejection of the null hypothesis of stationarity on the basis of $N=10000$ simulations, using the KPSS and LMC procedures.}
\label{TabSim}
\end{table}

\medskip

This phenomenon is a direct consequence of Theorem \ref{ThmStatH1}, in which we have proved that $\whk$ converges to zero when the unit root of the integrated process is located at $-1$. To correct this misuse, we suggest to modify the rejecting rules of the usual procedures depending on whether the alternative is $\cH_1^{+}$ or $\cH_1^{-}$. Let $k_{r,\, \alpha}$ be the $\alpha$--quantile of the limiting distribution of Theorem \ref{ThmStatH0} for a given $r$, with the convention that $k_{r,\, \alpha} = k_{\alpha}$ if $\kappa=0$. Then the corrected test takes the following form,
$$
C_{T} = \dI_{\{ \whk\, \in\, \cR_{\alpha} \}}  \hsp \text{with} \hsp \left\{
\begin{array}{ll}
\cR_{\alpha} = ~ ]\, k_{r,\, 1-\alpha},\, +\infty\, [ & ~ \text{for} ~ \cH_0 ~ \text{vs} ~ \cH_1^{+} \\
\cR_{\alpha} = [\, 0,\, k_{r,\, \alpha}\,[ & ~ \text{for} ~ \cH_0 ~ \text{vs} ~ \cH_1^{-}.
\end{array}
\right.
$$
Defined as above, the corrected test is exactly the LMC test for $r \leq 1$ and $\cH_1^{+}$, the generalization lies in $r \geq 2$ and the correction lies in the whole situations under $\cH_1^{-}$. In the particular case where $p=0$, it is even possible to build a two-sided test for stationarity,
$$
C_{T} = \dI_{\{ \whk\, \in\, \cR_{\alpha} \}}  \hsp \text{with} \hsp \cR_{\alpha} = [\, 0,\, k_{r,\, \frac{\alpha}{2}}\,[ ~ \cup ~ ]\, k_{r,\, 1-\frac{\alpha}{2}},\, +\infty\, [
$$
which is adapted to test for $\cH_0$ against $\cH_1 = \cH_1^{-} \cup \cH_1^{+}$. Figure \ref{FigTests} gives an overview of the corresponding rejection areas. However, it is crucial to note that for $p \neq 0$, it may be problematic to get a consistent estimation of $\theta$ since we cannot stationarize the process without any information on $\rho$. The two-sided procedure is therefore useful only for $p=0$, \textit{i.e.} in the KPSS framework.


\begin{figure}[h!]
\centering
\includegraphics[width=7.2cm]{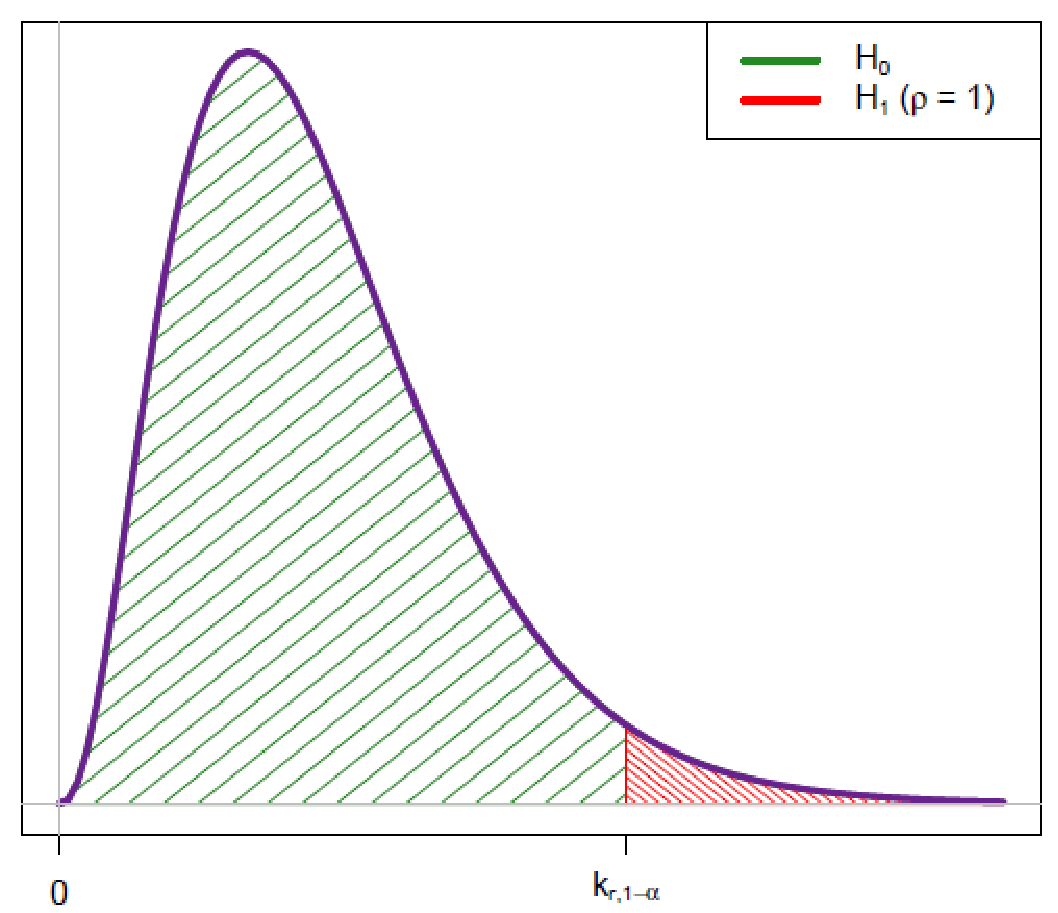} \includegraphics[width=7.2cm]{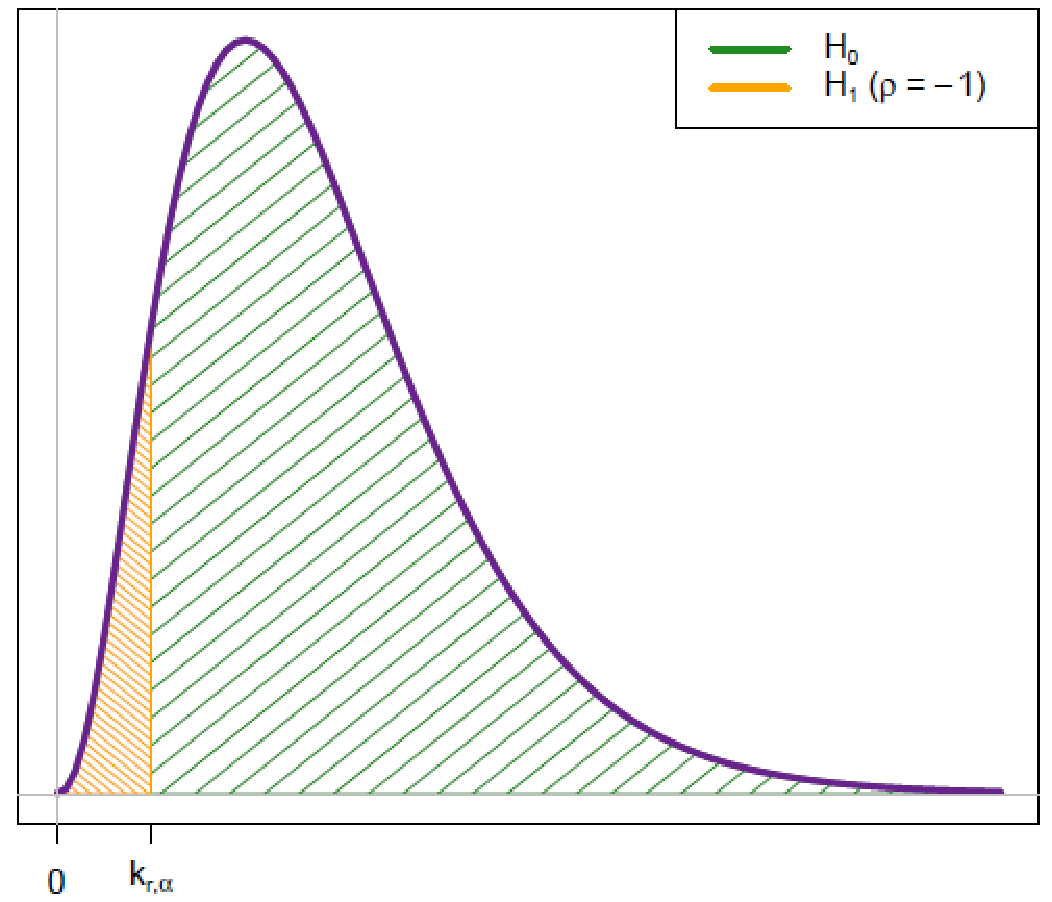}~\\~\\ \includegraphics[width=7.2cm]{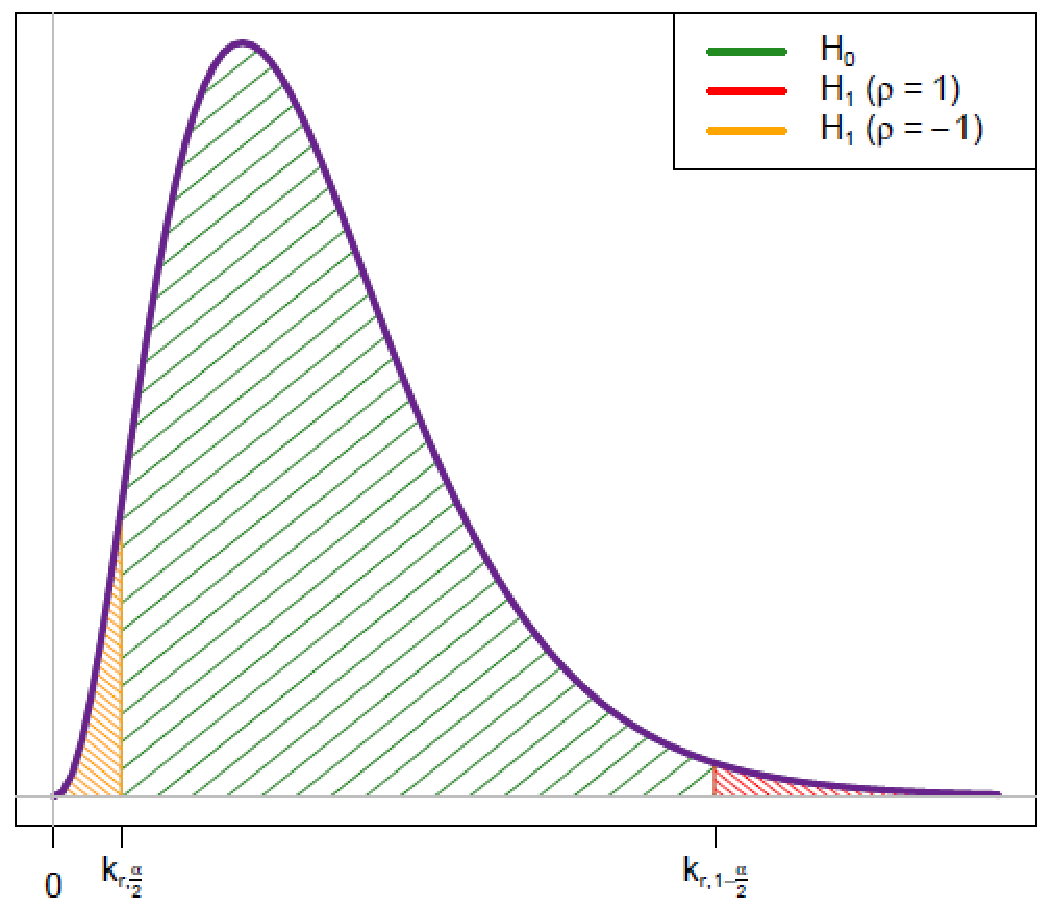}
\caption{\small Schematic representation of the rejection areas of $\cH_0$ to decide $\cH_1^{+}$ (top left), $\cH_1^{-}$ (top right) and $\cH_1^{-} \cup \cH_1^{+}$ (bottom), for a given significance level $\alpha$.}
\label{FigTests}
\end{figure}

\medskip

The application of the two-sided corrected test to the dataset used to fill Table \ref{TabSim} leads to 97.6 \% of rejection of $\cH_0$. With no doubt, this is a confirmation that $\cH_1^{-}$ is now correctly treated. The main corollary of the study is that our results should be rigorously driven to the KPSS procedure. Indeed, on the one hand, it is known that the LMC test suffers from size distortion for a stationary but strongly serially correlated process, as pointed out in \cite{CanerKilian01} or \cite{LanneSaikkonen03} among others, not forgetting that $p$ is always difficult to properly evaluate in an ARMA$(p,q)$ process. On the other hand, the corrected two-sided test could be conducted without choosing beforehand between $\cH_1^{+}$ and $\cH_1^{-}$ as the alternative. Such a test would be fully consistent for testing stationarity of ARMA processes, this is a trail for a future study.



\section{Proof of the main results}
\label{SecProof}


We are now going to prove our main results. We will consider in all the sequel the design matrix $A$ of order $(r+1) \times T$ defined as
\begin{equation}
\label{A}
A = \begin{pmatrix}
1 & 1 & \hdots & 1 & \hdots & 1 \\
1_{T} & 2_{T} & \hdots & k_{T} & \hdots & 1 \\
\vdots & \vdots & & \vdots & & \vdots \\
1_{T}^{r} & 2_{T}^{r} & \hdots & k_{T}^{r} & \hdots & 1
\end{pmatrix} \hsp \text{with} \hsp k_{T} = k/T.
\end{equation}
The Donsker's invariance principle and the Mann-Wald's continuity theorem being the cornerstone of all our reasonings, we found useful to remind them in this section.
\begin{thm}[Donsker]
\label{ThmDonsker}
Assume that $(Z_{T})$ is a sequence of independent and identically distributed random variables having mean 0 and finite variance $\sigma^2 > 0$. Let $S_0=0$ and $S_{T} = Z_1 + \hdots + Z_{T}$. For a given $0 < \tau \leq 1$, let also
\begin{equation*}
S_{T}^{(\tau)} = \frac{1}{\sigma \sqrt{T}} \left( S_{[T \tau]} + (T \tau - [T \tau]) Z_{[T \tau] + 1} \right).
\end{equation*}
Then, as $T$ goes to infinity, we have the weak convergence
\begin{equation*}
S_{T}^{(\tau)} \liml W(\tau)
\end{equation*}
where $W(t)$ is the standard Wiener process.
\end{thm}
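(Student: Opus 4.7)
The statement is the classical invariance principle, so the plan is to follow the standard two-step route: convergence of finite-dimensional marginals plus tightness, yielding weak convergence in the Banach space $C([0,1])$. Throughout, I would view $S_T^{(\tau)}$ as an element of $C([0,1])$, which it is by construction since the definition linearly interpolates the partial sums at the dyadic grid $\{k/T\}$. Without loss of generality, rescale so that $\sigma^2 = 1$.

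First, I would establish convergence of the finite-dimensional distributions. Fix $0 \leq \tau_1 < \tau_2 < \hdots < \tau_m \leq 1$ and write $S_T^{(\tau_j)}$ as a telescoping sum of independent blocks $(S_{[T\tau_j]} - S_{[T\tau_{j-1}]})/\sqrt{T}$, plus a negligible fractional correction of order $1/\sqrt{T}$. The multivariate Lindeberg--L\'evy central limit theorem then gives joint convergence of these blocks to independent centered Gaussians with variances $\tau_j - \tau_{j-1}$. Summing (or integrating against the obvious lower-triangular matrix) yields convergence to $(W(\tau_1), \hdots, W(\tau_m))$, which is the required finite-dimensional statement.

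Second, and this is the technically heavier step, I would establish tightness of the laws of $(S_T^{(\cdot)})$ on $C([0,1])$. By the standard characterization, it suffices to check that $S_T^{(0)} = 0$ is tight (trivial) and that for every $\eta > 0$,
\begin{equation*}
\limT \dP\Bigl( \sup_{|s-t| \leq \delta} \bigl| S_T^{(s)} - S_T^{(t)} \bigr| > \eta \Bigr) \longrightarrow 0 \hsp \text{as } \delta \to 0.
\end{equation*}
The piecewise linear structure of $S_T^{(\cdot)}$ reduces this to controlling the maximal oscillation of the underlying random walk on windows of length $[T\delta]$. The key tool is Kolmogorov's (or equivalently L\'evy's/Etemadi's) maximal inequality applied to the i.i.d. partial sums, which bounds the probability of such an oscillation by a constant times the variance $[T\delta]/T \leq \delta$. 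A standard stationarity/union-bound over the roughly $1/\delta$ disjoint windows, together with the finite variance hypothesis, yields the desired vanishing as $\delta \to 0$.

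Combining finite-dimensional convergence with tightness, Prokhorov's theorem together with the uniqueness of the limit on cylinder sets forces $S_T^{(\cdot)} \liml W(\cdot)$ in $C([0,1])$, which is the stated weak convergence. The main obstacle is the tightness step, where one must handle the supremum over a continuum of pairs $(s,t)$; the standard route via a maximal inequality circumvents this by reducing to a union bound over finitely many windows, at the mild cost of requiring only finite second moments. The finite-dimensional part, by contrast, is essentially a one-line consequence of the classical CLT applied block-by-block.
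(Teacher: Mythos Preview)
Your proposal is correct and follows the classical two-step route (finite-dimensional convergence via the CLT, then tightness via a maximal inequality) that is precisely the argument in Section~8 of Billingsley, which is all the paper invokes: the paper does not give its own proof but simply refers the reader to \cite{Billingsley99}. So your outline is in fact more detailed than what the paper provides, yet entirely aligned with the cited source.
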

\begin{thm}[Mann-Wald]
\label{ThmCMT}
Assume that $(Z_{T}, Z)$ is a sequence of random elements defined on a metric space $\cS$. Assume that the application $h : \cS \rightarrow \cS^{\prime}$, where $\cS^{\prime}$ is also a metric space, has a set of discontinuity points $\cD_{h}$ such that $\dP(Z \in \cD_{h}) = 0$. Then, as $T$ goes to infinity,
\begin{equation*}
Z_{T} \longrightarrow Z \hsp \Longrightarrow \hsp h(Z_{T}) \longrightarrow h(Z).
\end{equation*}
The implication holds for the convergence in distribution, the convergence in probability and the almost sure convergence.
\end{thm}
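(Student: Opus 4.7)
The plan is to treat the three modes of convergence separately, starting with the almost sure case as the engine of the argument, then deducing convergence in probability via the subsequence principle, and finally handling convergence in distribution through Skorokhod's representation theorem.

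For the almost sure mode, I would fix an outcome $\omega$ lying in the event $\{Z_T \to Z\} \cap \{Z \notin \cD_h\}$. The first set has probability one by assumption and the second has probability one by the hypothesis $\dP(Z \in \cD_h) = 0$, so their intersection still has full measure. On this event $Z(\omega)$ is a continuity point of $h$, and the sequential characterization of continuity on a metric space immediately gives $h(Z_T(\omega)) \to h(Z(\omega))$. Thus $h(Z_T) \to h(Z)$ almost surely.

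For convergence in probability I would appeal to the standard subsequence criterion: a sequence converges in probability if and only if every subsequence contains a further subsequence converging almost surely. Given any subsequence of $(Z_T)$, the hypothesis $Z_T \limp Z$ provides a further subsequence $(Z_{T_{n_k}})$ converging almost surely to $Z$, and the almost sure case already proved yields $h(Z_{T_{n_k}}) \to h(Z)$ almost surely. Since every subsequence of $(h(Z_T))$ admits a further subsequence converging almost surely to $h(Z)$, the criterion concludes $h(Z_T) \limp h(Z)$.

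The hardest case is convergence in distribution, since no joint realization of $Z_T$ and $Z$ is provided and one cannot argue pathwise on the original space. I would invoke Skorokhod's representation theorem: on an auxiliary probability space, construct random variables $\widetilde{Z}_T$ and $\widetilde{Z}$ with the same marginal laws as $Z_T$ and $Z$, such that $\widetilde{Z}_T \to \widetilde{Z}$ almost surely. Since $\widetilde{Z}$ has the law of $Z$, it still avoids $\cD_h$ with probability one, so the almost sure case applied to these copies yields $h(\widetilde{Z}_T) \to h(\widetilde{Z})$ almost surely, hence in distribution; transferring back through equality in law gives $h(Z_T) \liml h(Z)$. The main obstacle here is the applicability of Skorokhod's theorem, which requires $\cS$ to be a separable (ideally Polish) metric space. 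In the applications made in the sequel of the paper, $\cS$ is either a finite-dimensional Euclidean space or $C([0,1])$ endowed with the uniform norm, both of which are Polish, so the hypothesis is tacitly satisfied. An alternative route avoiding Skorokhod would rely on the Portmanteau theorem applied to the bounded measurable function $g \circ h$ for an arbitrary bounded continuous $g$ on $\cS^{\prime}$: its discontinuity set lies inside $\cD_h$ and therefore has zero mass under the law of $Z$, so the extension of Portmanteau to test functions continuous $\dP_Z$--almost everywhere yields $\dE[g(h(Z_T))] \to \dE[g(h(Z))]$, which is the desired conclusion.
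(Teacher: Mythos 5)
Your argument is correct, but it is worth noting that the paper does not actually prove this statement: it simply defers to Theorem 2.7 of Billingsley's book, so your self-contained three-part proof is genuinely more than what the paper offers. The almost sure case (intersecting the full-measure events $\{Z_T \to Z\}$ and $\{Z \notin \cD_h\}$ and using sequential continuity) and the in-probability case (the subsequence/sub-subsequence criterion, valid for metric-space-valued variables) are exactly the standard arguments and are airtight. For convergence in distribution you correctly identify the one real obstacle, namely that Skorokhod's representation theorem requires separability of $\cS$ (or at least separable support of the law of $Z$), and you correctly observe that this is harmless in the paper's applications, where $\cS$ is $\dR^k$ or $C([0,1])$. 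Your fallback via the Portmanteau theorem for bounded functions continuous $\dP_Z$--almost everywhere is legitimate, though it is itself a nontrivial extension usually proved by the same device; the cleanest route on an arbitrary metric space --- and the one Billingsley actually uses in the reference the paper cites --- is the closed-set form of Portmanteau together with the inclusion $\overline{h^{-1}(F)} \subseteq h^{-1}(F) \cup \cD_h$ for closed $F \subseteq \cS^{\prime}$, which gives $\limsup_T \dP(h(Z_T) \in F) \leq \dP(Z \in \overline{h^{-1}(F)}) \leq \dP(h(Z) \in F)$ with no separability assumption. Either way your proof is complete for the setting in which the theorem is used.
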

\begin{proof}
The Donsker's invariance principle is described and proved in Section 8 of \cite{Billingsley99}. The Mann-Wald's continuity theorem, usually called \textit{continuous mapping theorem}, is for example introduced in Theorem 2.7 of \cite{Billingsley99} and proved thereafter.
\end{proof}

In addition, we need to introduce an invariance principle for the residuals of the regression of a random sequence on a polynomial trend in the case where the disturbance has an integrated component. This is an extension of Theorem 1(d) of \cite{Stock99}. For $\kappa = 0$ but with a more general kind of perturbation, one can also find the foundations of this strategy in \cite{IbragimovPhillips08}.
\begin{lem}
\label{LemInvPRes}
Consider, for all $1 \leq t \leq T$, the model
\begin{equation*}
X_{t} = \alpha_0 + \alpha_1 t_{T} + \hdots + \alpha_{r}\, t_{T}^{r} + S_{t}^{(d)} + \veps_{t}
\end{equation*}
with $d \geq 1$ and $\kappa \neq 0$. Let $\wh{\alpha}_{T}$ be the least squares estimator of $\alpha$ and $(\whveps_{t})$ the estimated residual set. Then, we have the weak convergence
\begin{equation*}
\frac{\whveps_{[T \tau]}}{\sigma_{\eta} T^{d-1/2}} \liml W_{r,\, d-1}(\tau)
\end{equation*}
where where $(W_{r,\, d-1}(t),\, t \in [0,1])$ is the detrended Wiener process of order $r \times (d-1)$.
\end{lem}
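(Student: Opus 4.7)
The plan is to translate the residual into matrix form, isolate the dominant stochastic piece, and combine the ingredients via the continuity of the relevant functionals. First I would write the model as $Z = X^{\,\prime} \alpha + S^{(d)} + \veps$, where $X$ is the design matrix of \eqref{X} and its $t$-th column is $X_t = ( 1, t_T, \hdots, t_T^{\,r})^{\,\prime}$. Since $\wh\alpha_T - \alpha = (XX^{\,\prime})^{-1} X (S^{(d)} + \veps)$, this immediately gives the decomposition
\[
\whveps_t = S^{(d)}_t + \veps_t - X_t^{\,\prime} (XX^{\,\prime})^{-1} X ( S^{(d)} + \veps ),
\]
which is the object whose renormalized limit we want to identify.

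Three asymptotic ingredients then feed the proof. A direct Riemann sum on the entries yields $\frac{1}{T} XX^{\,\prime} \to M$, since the $(i,j)$--entry is $\frac{1}{T}\sum_{t=1}^{T} (t/T)^{i+j-2} \to \int_0^1 s^{i+j-2}\dd s = M_{ij}$. Next, iterating Donsker's invariance principle (Theorem \ref{ThmDonsker}) $d$ times on the random sequence $(\eta_t)$ furnishes the weak convergence
\[
\frac{S^{(d)}_{[T\tau]}}{\sigma_\eta\, T^{d-1/2}} \liml W^{(d-1)}(\tau) \hsp \text{in} \hsp D([0,1]).
\]
Applying the continuous linear functionals $f \mapsto \int_0^1 s^{i-1} f(s)\dd s$ and another Riemann sum then gives, jointly with the previous display,
\[
\frac{1}{\sigma_\eta\, T^{d+1/2}} X S^{(d)} \liml P_{d-1}(1).
\]
The contribution of $(\veps_t)$ is negligible for $d \geq 1$: since $\max_{t\leq T} |\veps_t|$ grows no faster than a polynomial power much smaller than $T^{d-1/2}$, one has $\veps_{[T\tau]}/T^{d-1/2} \to 0$ uniformly in $\tau$, while each component of $X\veps$ is $O_P(\sqrt{T})$ by the standard CLT and hence $X\veps/T^{d+1/2} = o_P(1)$.

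Combining these pieces via Slutsky's theorem and Mann-Wald (Theorem \ref{ThmCMT}), and noting that $X_{[T\tau]} \to \Lambda(\tau)$ componentwise, I would obtain
\[
\frac{\whveps_{[T\tau]}}{\sigma_\eta\, T^{d-1/2}} \liml W^{(d-1)}(\tau) - \Lambda(\tau)^{\,\prime} M^{-1} P_{d-1}(1),
\]
which is exactly $W_{r,d-1}(\tau)$ by Definition \ref{DefDWP} (using symmetry of $M^{-1}$ to rewrite $\Lambda(\tau)^{\,\prime} M^{-1} P_{d-1}(1) = P_{d-1}^{\,\prime}(1) M^{-1} \Lambda(\tau)$).

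The main obstacle I anticipate is the joint convergence of $S^{(d)}_{[T\cdot]}/(\sigma_\eta T^{d-1/2})$ and $XS^{(d)}/(\sigma_\eta T^{d+1/2})$, since separate marginal convergences would not be enough to apply the continuous mapping theorem to the residual expression. The clean remedy is to view the whole object as a single continuous functional of the underlying Skorokhod trajectory: the projection term $X_{[T\tau]}^{\,\prime}(XX^{\,\prime})^{-1} X S^{(d)}$, once correctly renormalized, is a continuous linear functional of the path $\tau \mapsto S^{(d)}_{[T\tau]}/(\sigma_\eta T^{d-1/2})$, so Mann-Wald applies directly on $D([0,1])$ and delivers the joint limit required. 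This same strategy extends the argument of Theorem 1(d) of \cite{Stock99} from $d=1$ to arbitrary integration order $d \geq 1$.
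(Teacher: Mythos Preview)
Your proposal is correct and follows essentially the same route as the paper: matrix form of the OLS residual, iteration of Donsker's principle on $(\eta_t)$ to obtain $S^{(d)}_{[T\tau]}/(\sigma_\eta T^{d-1/2}) \Rightarrow W^{(d-1)}(\tau)$, negligibility of the $(\veps_t)$-contribution, Riemann-sum limits for $XX^{\prime}/T$ and for the projection term, and assembly via Slutsky and the continuous mapping theorem --- including the same explicit reference to Theorem~1(d) of \cite{Stock99} for the base case $d=1$. One notational slip: the limit you compute for $X S^{(d)}/(\sigma_\eta T^{d+1/2})$ is the vector $\big(\int_0^1 s^{\,i-1} W^{(d-1)}(s)\,\dd s\big)_{i=1}^{r+1}$, which in the paper's convention \eqref{Pd} is $P_d(1)$ (its first entry being $W^{(d)}(1)=\int_0^1 W^{(d-1)}(s)\,\dd s$), not $P_{d-1}(1)$; the paper's own proof writes $P_d$ at this step.
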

\begin{proof}
Recall that $(S_{t}^{(d)})$ is a random walk of order $d$ generated by a white noise sequence $(\eta_{t})$ of variance $\sigma_{\eta}^2 > 0$, that we can define as
\begin{equation}
\label{ModRandWalkMulti}
\left\{
\begin{array}{l}
S_{t}^{(d)} = S_{t-1}^{(d)} + S_{t}^{(d-1)} \\
\vdots \\
S_{t}^{(2)} = S_{t-1}^{(2)} + S_{t}^{(1)} \\
S_{t}^{(1)} = S_{t-1}^{(1)} + \eta_{t}
\end{array}
\right.
\end{equation}
where we consider to lighten the calculations that $S_0^{(1)} = \hdots = S_0^{(d)} = 0$. The least squares estimator of $\alpha$ is given by
\begin{equation}
\label{OLS}
\wh{\alpha}_{T} = \left( \sum_{t=1}^{T} A_{t}\, A_{t}^{\prime} \right)^{\! -1} \sum_{t=1}^{T} A_{t} X_{t} = R_{T}^{-1} \sum_{t=1}^{T} A_{t} X_{t}
\end{equation}
where $A_{t}$ is the $t-$th column of $A$ given by \eqref{A}. It follows that
\begin{equation}
\label{DiffOLS}
\wh{\alpha}_{T} - \alpha = R_{T}^{-1} P_{T} \hsp \hsp \text{with} \hsp \hsp P_{T} = \sum_{t=1}^{T} A_{t} w_{t}
\end{equation}
in which we define the residual $w_{t} = S_{t}^{(d)} + \veps_{t}$. We start by establishing an invariance principle for $(w_{t})$. First, Theorem \ref{ThmDonsker} is sufficient to get
\begin{equation}
\label{InvPRes1}
\frac{S_{[T \tau]}^{(1)}}{\sigma_{\eta} \sqrt{T}} = \frac{1}{\sigma_{\eta} \sqrt{T}} \sum_{t=1}^{[T \tau]} \eta_{t} \liml W(\tau).
\end{equation}
By extension,
\begin{equation}
\label{InvPRes2}
\frac{S_{[T \tau]}^{(2)}}{\sigma_{\eta} T^{3/2}} = \frac{1}{\sigma_{\eta} T^{3/2}} \sum_{t=1}^{[T \tau]} S_{t}^{(1)} = \sum_{t=1}^{[T \tau]} \int_{\frac{t}{T}}^{\frac{t+1}{T}} \frac{S_{[T s]}^{(1)}}{\sigma_{\eta} T^{1/2}} \dd s \liml \int_0^{\tau} W(s) \dd s \equiv W^{(1)}(\tau)
\end{equation}
from Theorem \ref{ThmCMT}. Iterating the process, we obtain, for $d \geq 2$,
\begin{equation}
\label{InvP_Sd}
\frac{S_{[T \tau]}^{(d)}}{\sigma_{\eta} T^{d-1/2}} \liml \int_0^{\tau} \int_0^{s_{1}} \hdots \int_0^{s_{d-2}} W(s_{d-1}) \dd s_{d-1} \hdots \dd s_{1} \equiv W^{(d-1)}(\tau).
\end{equation}
Since $\veps_{[T \tau]} = o(T^{d-1/2})$ a.s. from the strong law of large numbers, it follows that $(w_{t})$ also satisfies the invariance principle given by \eqref{InvP_Sd}, for all $d \geq 1$. For $d=1$, one can identify the limiting distribution in \eqref{InvP_Sd} and $\sigma_{\eta}$ to $W$ and $\sqrt{\omega}$ in Assumption 1(a) of \cite{Stock99}. In addition, the $k-$th line of $P_{T}$ given in \eqref{DiffOLS} is
\begin{equation}
\label{Pk}
P_{k,\, T} = \sum_{t=1}^{T} t_{T}^{k-1} w_{t} = \frac{1}{T^{k-1}} \sum_{t=1}^{T} t^{k-1} w_{t}.
\end{equation}
We are now going to study the rate of convergence of $P_{k,\, T}$. For all $1 \leq i \leq d$, denote $\delta_{k}(i) = i+k-1/2$. We can use \eqref{InvP_Sd} to get
\begin{equation}
\label{InvP_SumTkW}
\frac{1}{\sigma_{\eta} T^{\delta_{k}(d)}} \sum_{t=1}^{[T \tau]} t^{k-1} w_{t} = \sum_{t=1}^{[T \tau]} \int_{\frac{t}{T}}^{\frac{t+1}{T}} \frac{[T s]^{k-1} w_{[T s]}}{\sigma_{\eta} T^{k-1} T^{\delta_{0}(d)}} \dd s \liml \int_0^{\tau} s^{k-1} W^{(d-1)}(s) \dd s.
\end{equation}
By combining \eqref{Pk} and \eqref{InvP_SumTkW}, we find that, for all $d \geq 1$,
\begin{equation}
\label{InvP_P}
\frac{P_{[T \tau]}}{\sigma_{\eta} T^{d+1/2}} \liml P_{d}(\tau)
\end{equation}
where the limiting distribution is given in \eqref{Pd}. Moreover, by a direct calculation,
\begin{equation}
\label{LimR}
\limT \frac{R_{T}}{T} = M \hsp \text{and} \hsp \limT T R_{T}^{-1} = M^{-1}
\end{equation}
where $R_{T}$ is given in \eqref{OLS} and the nonsingular matrix $M$ satisfies $M_{i j} = 1/(i+j-1)$ for all $1 \leq i,j \leq r+1$. It follows from \eqref{DiffOLS}, \eqref{InvP_P} and \eqref{LimR} that
\begin{equation}
\label{Tlc_OLS}
\frac{\wh{\alpha}_{T} - \alpha}{\sigma_{\eta} T^{d - 1/2}} \liml M^{-1} P_{d}(1).
\end{equation}
It only remains to notice that
\begin{equation}
\label{DecompResOLS}
\frac{\whveps_{[T \tau]}}{T^{d-1/2}} = \frac{w_{[T \tau]}}{T^{d-1/2}} - \frac{\big( \wh{\alpha}_{T} - \alpha \big)^{\prime} A_{[T \tau]}}{T^{d-1/2}}
\end{equation}
and to combine \eqref{InvP_Sd} and \eqref{Tlc_OLS} to conclude that, for $d \geq 1$,
\begin{equation*}
\frac{\whveps_{[T \tau]}}{\sigma_{\eta} T^{d-1/2}} \liml W^{(d-1)}(\tau) - P_{d}^{\, \prime}(1) M^{-1} \Lambda(\tau) \equiv W_{r,\, d-1}(\tau)
\end{equation*}
from Theorem \ref{ThmCMT}, where $\Lambda(\tau) = \begin{pmatrix} 1 & \tau & \hdots & \tau^{r} \end{pmatrix}^{\prime}$ is the limiting value of $A_{[T \tau]}$. For $d=1$, the latter convergence is given in Theorem 1(d) of \cite{Stock99}. This achieves the proof of Lemma \ref{LemInvPRes}.
\end{proof}

\medskip

\noindent \textbf{Proof of Theorem \ref{ThmStatH0}.} Denote by $P = A^{\prime} (A A^{\prime})^{-1} A$ the projection matrix and by $I_{T}$ the identity matrix of order $T$. We start by expressing $(\whveps_{t})$ in terms of $(\veps_{t})$ to establish an invariance principle such as Theorem \ref{ThmDonsker} on $(S_{t})$ given by \eqref{SumEstRes}. We first consider the general case where $\kappa \neq 0$. From \eqref{Ystar} and \eqref{EstRes}, since $\wh{\alpha}_{T}$ is the least squares estimator of $\alpha$, a direct calculation shows that, for all $1 \leq t \leq T$,
\begin{equation}
\label{ResSingleH0}
\whveps_{t} = \check{X}_{t} - \wh{\alpha}_0 - \wh{\alpha}_1 t_{T} - \hdots - \wh{\alpha}_{r}\, t_{T}^{r} = \sum_{i=1}^{p} (\theta_{i} - \check{\theta}_{i})\, u_{i,\,t} + u_{t}
\end{equation}
where $u_{t}$ is the $t-$th component of $(I_{T}-P) \veps$, and, for $1 \leq i \leq p$, $u_{i,\,t}$ is the $t-$th component of $(I_{T}-P) X_{-i}$ with $X_{-i}^{\, \prime} = \begin{pmatrix} X_{1-i} & \hdots & X_{T-i} \end{pmatrix}$. From Theorem 1 of \cite{MacNeill78}, we have the weak convergence
\begin{equation}
\label{InvP_ResH0}
\frac{1}{\sigma_{\veps} \sqrt{T}} \sum_{t=1}^{[T \tau]} u_{t} \liml B_{r}(\tau).
\end{equation}
In addition, for any $1 \leq i \leq p$ and since $\Theta$ is causal, equation \eqref{ModSingle} leads to
\begin{equation}
\label{Yi}
X_{t-i} = \Theta^{-1}(L)(\alpha_0 + \alpha_1 (t-i)_{T} + \hdots + \alpha_{r} (t-i)_{T}^{r}) + \mu_{t-i}
\end{equation}
where $(t-i)_{T} = (t-i)/T$ and $\Theta(L) \mu_{t-i} = \veps_{t-i}$. The coefficients of the deterministic trend are identifiable \textit{via} a tedious but straightforward calculation. It follows from \eqref{Yi} that $(\mu_{t})$ is a stable stationary AR$(p)$ process which also satisfies an invariance principle, as it is stipulated for example in Theorem 1 of \cite{DedeckerRio00}. If we define the so-called long-run variance as
\begin{equation*}
\sigma^2_{\mu} = \dE\, [\mu_0^2] + 2 \sum_{k=1}^{\infty} \dE\, [\mu_0 \mu_{k}]
\end{equation*}
which is finite for a stable AR process (see Chapter 3 of \cite{BrockwellDavis96}), then, for all $1 \leq i \leq p$,
\begin{equation}
\label{InvP_ARResH0}
\frac{1}{\sigma_{\mu} \sqrt{T}} \sum_{t=1}^{[T \tau]} u_{i,\,t} \liml B_{r}(\tau),
\end{equation}
by using again Theorem 1 of \cite{DedeckerRio00}. Convergence \eqref{InvP_ARResH0} and the consistency of $\check{\theta}_{T}$ imply that
\begin{equation}
\label{CvgResARH0}
\frac{1}{\sigma_{\mu} \sqrt{T}} \sum_{i=1}^{p} (\theta_{i} - \check{\theta}_{i}) \sum_{t=1}^{[T \tau]} u_{i,\,t} \limp 0.
\end{equation}
Noticing that $(S_{t})$ in \eqref{SumEstRes} is the partial sum process of $(\whveps_{t})$, it follows that
\begin{equation}
\label{InvP_SH0}
\frac{S_{[T \tau]}}{\sigma_{\veps} \sqrt{T}} \liml B_{r}(\tau).
\end{equation}
In addition, it is not hard to see that
\begin{equation*}
\limT \frac{1}{T} \sum_{t=1}^{T} u_{t}^2 = \sigma^2_{\veps} \cvgps
\end{equation*}
since $(u_{t})$ can be seen as the residual of the regression of $(\veps_{t})$ on a polynomial time trend with zero coefficients. The same kind of convergence can be reached for $(u_{i,\,t})$ following a similar methodology as in \cite{PhillipsPerron88}, since $(u_{i,\,t})$ can be seen as the residual of the regression of a weakly stationary process $(\mu_{t})$ on a polynomial time trend also with zero coefficients. Hence, by the Cauchy-Schwarz's inequality,
\begin{equation}
\label{CvgEstVarH0}
\limT \frac{Q_{T}}{T} = \sigma^2_{\veps} \cvgps
\end{equation}
where the process $(Q_{t})$ is given by \eqref{SumEstRes}. Finally,
\begin{equation*}
\frac{1}{\sigma^2_{\veps}\, T^2} \sum_{t=1}^{[T \tau]} S_{t}^{\, 2} = \frac{1}{T}\, \sum_{t=1}^{[T \tau]} \left( \frac{S_{t}}{\sigma_{\veps} \sqrt{T}} \right)^{\! 2} = \sum_{t=1}^{[T \tau]} \int_{\frac{t}{T}}^{\frac{t+1}{T}} \left(  \frac{S_{[T s]}}{\sigma_{\veps} \sqrt{T}} \right)^{\! 2} \dd s \liml \int_0^{\tau} B_{r}^{\, 2} (s) \dd s
\end{equation*}
by application of Theorem \ref{ThmCMT}. This achieves the proof of Theorem \ref{ThmStatH0}, using \eqref{InvP_SH0}, \eqref{CvgEstVarH0}, Slutsky's lemma and taking $\tau = 1$, in the case where there is a polynomial trend. On the other hand, for $\kappa = 0$, $P$ is the zero matrix and we merely have $u_{t} = \veps_{t}$ and $u_{i,\,t} = X_{-i}$ in \eqref{ResSingleH0}, for all $1 \leq t \leq T$ and $1 \leq i \leq p$. Then, convergence \eqref{CvgEstVarH0} follows from the strong law of large numbers and, by Theorem \ref{ThmDonsker}, the invariance principle \eqref{InvP_SH0} becomes
\begin{equation}
\label{InvP_SH0NT}
\frac{S_{[T \tau]}}{\sigma_{\veps} \sqrt{T}} \liml W(\tau).
\end{equation}
The end of the proof follows the same reasoning as above. \hfill
$\mathbin{\vbox{\hrule\hbox{\vrule height1ex \kern.5em\vrule height1ex}\hrule}}$

\medskip

\noindent \textbf{Proof of Theorem \ref{ThmStatH1}.} We now suppose that $\sigma^2_{\eta} > 0$, implying that the process has a stochastic nonstationarity generated by the random walk $(S_{t}^{\eta})$ given by \eqref{RWSingle}. We first consider the general case $\kappa \neq 0$. In the same way as for \eqref{ResSingleH0}, we obtain
\begin{equation}
\label{ResSingleH1}
\whveps_{t} = \check{X}_{t} - \wh{\alpha}_0 - \wh{\alpha}_1 t_{T} - \hdots - \wh{\alpha}_{r}\, t_{T}^{r} = \sum_{i=1}^{p} (\theta_{i} - \check{\theta}_{i})\, u_{i,\,t} + u_{\eta,\,t}
\end{equation}
where $u_{\eta,\,t}$ is the $t-$th component of $(I_{T}-P) (S^{\eta} + \veps)$. In addition, for all $1 \leq i \leq p$, $u_{i,\,t}$ is the $t-$th component of $(I_{T}-P) X_{-i}$ and $X_{-i}$ is given, for all $1 \leq t \leq T$, by
\begin{equation}
\label{YiH1}
X_{t-i} = \Theta^{-1}(L)(\alpha_0 + \alpha_1 (t-i)_{T} + \hdots + \alpha_{r} (t-i)_{T}^{r}) + T^{\eta}_{t-i}
\end{equation}
and $\Theta(L) T^{\eta}_{t-i} = S^{\eta}_{t-i} + \veps_{t-i}$, with the notations of \eqref{Yi}. Hence, $((I - \rho L) T^{\eta}_{t-i})$ is second-order equivalent in moments to a stationary ARMA$(p,1)$ process. From Theorem 1 of \cite{DedeckerRio00}, it satisfies an invariance principle in which its long-run variance is involved, and the rate is $\sqrt{T}$. Then, by Theorem \ref{ThmCMT} and standard calculations, one can see that $(u_{i,\,t})$ behaves like $(u_{\eta,\,t})$ since all invariance principles on $(u_{\eta,\,t})$ can also be established on $(u_{i,\,t})$. However as $\check{\theta}_{T}$ is consistent, it appears that all asymptotic results will only be driven by $(u_{\eta,\,t})$, $(u_{\eta,\,t}^{2})$ and their partial sum processes. First, by Theorem \ref{ThmDonsker} in the case where $\rho=1$, we have already seen in \eqref{InvPRes1} that we have the invariance principle
\begin{equation}
\label{InvP_SetaH1}
\frac{S_{[T \tau]}^{\eta}}{\sigma_{\eta} \sqrt{T}} \liml W(\tau).
\end{equation}
For $\rho=-1$, one cannot directly apply Theorem \ref{ThmDonsker} since $(S^{\eta}_{t})$ is not built from identically distributed random variables. However, convergence \eqref{InvP_SetaH1} still holds by using for example Theorem 1 of \cite{DedeckerRio00}. Depending on the value of $\rho$, the end of the proof is totally different. On the one hand, for $\rho=1$, from Lemma \ref{LemInvPRes} with $d=1$, we have the weak convergence
\begin{equation}
\label{InvP_ResSetaH1}
\frac{u_{\eta,\, [T \tau]}}{\sigma_{\eta} \sqrt{T}} \liml W_{r,\, 0}(\tau).
\end{equation}
It follows that
\begin{equation}
\label{InvP_SumResSetaH1}
\frac{1}{\sigma_{\eta} T^{3/2}} \sum_{t=1}^{[T \tau]} u_{\eta,\, t} = \sum_{t=1}^{[T \tau]} \int_{\frac{t}{T}}^{\frac{t+1}{T}} \frac{u_{\eta,\, [T s]}}{\sigma_{\eta} \sqrt{T}} \dd s \liml \int_0^{\tau} W_{r,\, 0}(s) \dd s \equiv C_{r,\, 1}(\tau)
\end{equation}
by application of Theorem \ref{ThmCMT}. Since the leading term of $\whveps_{t}$ is $u_{\eta,\, t}$ as it is explained above and using convergence \eqref{InvP_ResSetaH1}, we get an invariance principle for the partial sum process $(S_{t})$ in \eqref{SumEstRes}, given by
\begin{equation}
\label{InvP_SH1}
\frac{S_{[T \tau]}}{\sigma_{\eta} T^{3/2}} \liml C_{r,\, 1}(\tau).
\end{equation}
We can also reach the same convergence by using Theorem 1 of \cite{MacNeill78} combined with convergence \eqref{InvPRes2}, that is
\begin{equation}
\label{InvP_SumSetaH1}
\frac{1}{\sigma_{\eta} T^{3/2}} \sum_{t=1}^{[T \tau]} S_{t}^{\eta} = \sum_{t=1}^{[T \tau]} \int_{\frac{t}{T}}^{\frac{t+1}{T}} \frac{S_{[T s]}^{\eta}}{\sigma_{\eta} \sqrt{T}} \dd s \liml \int_0^{\tau} W(s) \dd s \equiv W^{(1)}(\tau).
\end{equation}
Of course, \eqref{CvgEstVarH0} cannot hold under $\cHa$ and the asymptotic behavior of $Q_{T}$ will now stem from \eqref{InvP_ResSetaH1}. Indeed,
\begin{equation*}
\frac{1}{\sigma_{\eta}^2\, T^{2}} \sum_{t=1}^{[T \tau]} u_{\eta,\, t}^2 = \sum_{t=1}^{[T \tau]} \int_{\frac{t}{T}}^{\frac{t+1}{T}} \left( \frac{u_{\eta,\, [T s]}}{\sigma_{\eta} \sqrt{T}} \right)^{\! 2} \dd s \liml \int_0^{\tau} W_{r,\, 0}^{\, 2}(s) \dd s
\end{equation*}
implying that 
\begin{equation}
\label{CvgEstVarH1}
\frac{Q_{[T \tau]}}{\sigma_{\eta}^2\, T^{2}} \liml \int_0^{\tau} W_{r,\, 0}^{\, 2}(s) \dd s.
\end{equation}
In addition, from \eqref{InvP_SH1},
\begin{equation*}
\frac{1}{\sigma^2_{\eta}\, T^4} \sum_{t=1}^{[T \tau]} S_{t}^{\, 2} = \frac{1}{T}\, \sum_{t=1}^{[T \tau]} \left( \frac{S_{t}}{\sigma_{\eta} T^{3/2}} \right)^{\! 2} = \sum_{t=1}^{[T \tau]} \int_{\frac{t}{T}}^{\frac{t+1}{T}} \left(  \frac{S_{[T s]}}{\sigma_{\eta} T^{3/2}} \right)^{\! 2} \dd s \liml \int_0^{\tau} C_{r,\, 1}^{\, 2}(s) \dd s.
\end{equation*}
The latter convergence together with \eqref{CvgEstVarH1} and Theorem \ref{ThmCMT} achieve the first part of the proof, by selecting $\tau=1$. On the other hand, for $\rho=-1$, the summation \eqref{InvP_SumSetaH1} is different due to the phenomenon of compensation. As a matter of fact, it is not hard to see that, for any even and odd integer $t \geq 1$, respectively, we have
\begin{equation*}
\sum_{k=1}^{t} S_{k}^{\eta} = \sum_{k=1}^{t/2} \eta_{2k} \hsp \hsp \text{and} \hsp \hsp \sum_{k=1}^{t} S_{k}^{\eta} = \sum_{k=1}^{(t+1)/2} \eta_{2k-1}.
\end{equation*}
Let $(\zeta_{t})$ be the sequence defined, for an even $T$ and all $1 \leq t \leq T/2$, by
\begin{equation*}
\zeta_{t} = \veps_{2 t-1} + \veps_{2 t} + \eta_{2 t}
\end{equation*}
and, for an odd $T$ and all $1 \leq t \leq (T+1)/2$, by
\begin{equation*}
\zeta_{t} = \veps_{2 t-1} + \veps_{2 (t-1)} + \eta_{2 t-1}.
\end{equation*}
Hence, $\dE[\zeta_{t}]=0$, $\dE[\zeta_{t}^2] = 2 \sigeps + \sige$ and all covariances are zero, since $(\veps_{t})$ and $(\eta_{t})$ are not correlated. It follows that $(\zeta_{t})$ is a white noise and that it satisfies, by virtue of Theorem 1 of \cite{DedeckerRio00}, the invariance principle
\begin{equation}
\label{InvPSumSetaH1b}
\frac{1}{\sqrt{T}} \sum_{t=1}^{[T \tau]} \zeta_{t} \liml \sqrt{ 2 \sigeps + \sige } ~ W(\tau).
\end{equation}
Thus, we obtain the invariance principles
\begin{equation*}
\frac{1}{\sqrt{T}} \sum_{t=1}^{[T \tau]} \big( S_{t}^{\eta} + \veps_{t} \big) = \frac{1}{\sqrt{T}} \sum_{t=1}^{[T \tau/2]} \zeta_{t} \liml \sqrt{ 2 \sigeps + \sige } ~ W\!\left( \frac{\tau}{2} \right) \egl \sqrt{ \frac{2 \sigma_{\veps}^2 + \sigma_{\eta}^2}{2} } ~ W(\tau)
\end{equation*}
and, by application of Theorem 1 of \cite{MacNeill78},
\begin{equation}
\label{InvPSumResSetaH1b}
\frac{1}{\sqrt{T}} \sum_{t=1}^{[T \tau]} u_{\eta,\, t} \liml \sqrt{ \frac{2 \sigma_{\veps}^2 + \sigma_{\eta}^2}{2} } ~ B_{r}(\tau).
\end{equation}
Exploiting the latter convergence and the domination of $u_{\eta,\, t}$ in $\whveps_{t}$ (the estimator of $\theta$ remaining consistent), it follows that
\begin{equation}
\label{InvPSumResS2H1b}
\frac{1}{T^2} \sum_{t=1}^{[T \tau]} S_{t}^{\, 2} = \sum_{t=1}^{[T \tau]} \int_{\frac{t}{T}}^{\frac{t+1}{T}} \left( \frac{S_{[T s]}}{\sqrt{T}} \right)^{2} \dd s \liml \frac{2 \sigma_{\veps}^2 + \sigma_{\eta}^2}{2} \int_0^{\tau} B_{r}^{\, 2}(s) \dd s.
\end{equation}
Let us now restart the reasoning developed in Lemma \ref{LemInvPRes}, but for $d=1$ and $\rho=-1$. We recall that, using the notations associated with \eqref{Pk}, for all $1 \leq k \leq r+1$,
\begin{equation*}
P_{k,\, T} = \sum_{t=1}^{T} t_{T}^{k-1} w_{t} = \frac{1}{T^{k-1}} \sum_{t=1}^{T} t^{k-1} \left( S_{t}^{\eta} + \veps_{t} \right).
\end{equation*}
First, it is not hard to see that
\begin{equation*}
M^{k}_{T} = \sum_{t=1}^{T} t^{k-1} \veps_{t}
\end{equation*}
is a martingale adapted to the natural filtration of $(\veps_{t})$, whose increasing process is such that $\langle M^{k} \rangle_{T} = O(T^{2k - 1})$ a.s. with obviously
$$
\limT \langle M^{k} \rangle_{T} = +\infty \cvgps
$$
The law of large numbers for scalar martingales (see \textit{e.g.} \cite{Duflo97}) implies that $M^{k}_{T} = o(T^{k})$ a.s. Hence,
\begin{equation}
\label{OLSH1bT1}
\frac{P_{k,\, T}}{T} = \frac{1}{T^{k}} \sum_{t=1}^{T} t^{k-1} S_{t}^{\eta} ~ + ~ o(1) \cvgps
\end{equation}
In addition, denote by $(\Sigma_{t}^{\eta})$ the partial sum process associated with $(\eta_{t})$ for $\rho = 1$. Let also $(\Lambda_{t}^{\eta})$ and $(\Pi_{t}^{\eta})$ be the partial sum processes associated with $(\eta_{t})$, for the even and odd subscripts, respectively. Explicitly,
\begin{equation*}
\Lambda_{p_{t}}^{\eta} = \eta_{2} + \eta_{4} + \hdots + \eta_{2 p_{t}} = \sum_{\ell=1}^{p_{t}} \eta_{2\ell}
\end{equation*}
and
\begin{equation*}
\Pi_{i_{t}}^{\eta} = \eta_{1} + \eta_{3} + \hdots + \eta_{2 i_{t} - 1} = \sum_{\ell=1}^{i_{t}} \eta_{2\ell-1}
\end{equation*}
with $i_{t} = [(t+1)/2]$ and $p_{t} = t - [(t+1)/2]$. A direct calculation shows that, for $\rho=-1$ and all $1 \leq k \leq r+1$,
\begin{equation}
\label{DecompStH1b}
\sum_{t=1}^{T} t^{k-1} S_{t}^{\eta} = \sum_{t=1}^{T} t^{k-1} \Sigma_{t}^{\eta} - 2 \sum_{t=1}^{p_{T}} (2t+1)^{k-1} \Lambda_{t}^{\eta} - 2 \sum_{t=1}^{i_{T}} (2t)^{k-1} \Pi_{t}^{\eta} + 2\, r_{T}
\end{equation}
where we have $r_{T} = (T+1)^{k-1} \Pi_{(T+1)/2}^{\eta}$ for all odd $T$ and $r_{T}=(T+1)^{k-1} \Lambda_{T/2}$ for all even $T$. It is possible, \textit{via} Theorem \ref{ThmDonsker}, to establish an invariance principle on the processes $(\Lambda_{t}^{\eta})$ and $(\Pi_{t}^{\eta})$. As a matter of fact,
\begin{equation}
\label{DecompStH1bPID}
\frac{\Lambda_{[p_{T} \tau]}^{\eta}}{\sigma_{\eta} \sqrt{p_{T}}} \liml W(\tau) \hsp \text{and} \hsp \frac{\Pi_{[i_{T} \tau]}^{\eta}}{\sigma_{\eta} \sqrt{i_{T}}} \liml W(\tau).
\end{equation}
It follows, from Theorem \ref{ThmCMT}, that
\begin{eqnarray}
\label{DecompStH1bPIDSP}
\frac{1}{\sigma_{\eta}\, p_{T}^{~ k+1/2}} \sum_{t=1}^{[p_{T} \tau]} (2t+1)^{k-1} \Lambda_{t}^{\eta} = \sum_{t=1}^{[p_{T} \tau]} \int_{\frac{t}{p_{T}}}^{\frac{t+1}{p_{T}}} \frac{(2 [p_{T} s]+1 )^{k-1}\, \Lambda_{[p_{T} s]}^{\eta}}{\sigma_{\eta}\, p_{T}^{~ k-1} \sqrt{p_{T}}} \dd s \hspace{2cm} \nonumber \\
\hspace{5cm} \liml \int_0^{\tau} (2 s)^{k-1}\, W(s)\, \dd s
\end{eqnarray}
and that
\begin{eqnarray}
\label{DecompStH1bPIDSI}
\frac{1}{\sigma_{\eta}\, i_{T}^{~ k+1/2}} \sum_{t=1}^{[i_{T} \tau]} (2t)^{k-1} \Pi_{t}^{\eta} = \sum_{t=1}^{[i_{T} \tau]} \int_{\frac{t}{i_{T}}}^{\frac{t+1}{i_{T}}} \frac{(2 [i_{T} s] )^{k-1}\, \Pi_{[i_{T} s]}^{\eta}}{\sigma_{\eta}\, i_{T}^{~ k-1} \sqrt{i_{T}}} \dd s \hspace{2cm} \nonumber \\
\hspace{5cm} \liml \int_0^{\tau} (2 s)^{k-1}\, W(s)\, \dd s
\end{eqnarray}
since it is not hard to see that $p_{T}$ and $i_{T}$ behave like $T/2$. Moreover, the convergences \eqref{DecompStH1bPID} and the definition of $r_{T}$ directly lead to
\begin{equation}
\label{DecompStH1bPIDReste}
\frac{r_{T}}{T^{k+1/2}} \limp 0.
\end{equation}
In addition, the invariance principle \eqref{InvP_SumTkW} for $\rho=1$ and $d=1$, here corresponding to the one associated with $(\Sigma_{t}^{\eta})$, gives, together with \eqref{DecompStH1b}, \eqref{DecompStH1bPIDSP}, \eqref{DecompStH1bPIDSI} and \eqref{DecompStH1bPIDReste},
\begin{equation*}
\frac{1}{T^{k+1/2}} \sum_{t=1}^{T} t^{k-1} S_{t}^{\eta} = O_{\dP}(1)
\end{equation*}
and thus, with the notations above, for all $1 \le k \leq r+1$,
\begin{equation*}
\frac{P_{k,\, T}}{T^{3/2}} = O_{\dP}(1) \hsp \text{and} \hsp \frac{u_{\eta,\, T}}{\sqrt{T}} = \frac{S_{T}^{\eta} + \veps_{T}}{\sqrt{T}} + O_{\dP}(1),
\end{equation*}
successively using \eqref{DiffOLS} and \eqref{DecompResOLS}. By virtue of Theorems \ref{ThmDonsker}--\ref{ThmCMT} and the strong law of large numbers, we deduce, following the same calculations, that the process $(Q_{t})$ grows with rate $T^2$, which achieves the proof for $\rho=-1$ since \eqref{InvPSumResS2H1b} shows that the numerator of $\whk$ also grows with the same rate. Finally, for $\kappa = 0$, the invariance principle \eqref{InvP_ResSetaH1} merely becomes
\begin{equation}
\label{InvP_ResSetaH1NT}
\frac{u_{\eta,\, [T \tau]}}{\sigma_{\eta} \sqrt{T}} \liml W(\tau)
\end{equation}
from Theorem \ref{ThmDonsker}, and the end of the reasoning easily follows as above. \hfill
$\mathbin{\vbox{\hrule\hbox{\vrule height1ex \kern.5em\vrule height1ex}\hrule}}$

\medskip

\noindent \textbf{Proof of Proposition \ref{PropStatH1b}.} This proof will be very succinct since all results have been established in the previous reasonings. Indeed, for $\kappa=0$ and $\rho=-1$, convergence \eqref{InvPSumResS2H1b} becomes
\begin{equation*}
\frac{1}{T^2} \sum_{t=1}^{[T \tau]} S_{t}^2 \liml \sigeps \int_{0}^{\tau} W_{\veps}^{\, 2}(s) \dd s ~ + ~ \frac{\sige}{2} \int_{0}^{\tau} W_{\eta}^{\, 2}(s) \dd s,
\end{equation*}
if we split the limiting distribution into two independent components, so as to easily deal with in the sequel. Without any trend fitted, we also have $u_{\eta,\,t} = S_{t}^{\eta} + \veps_{t}$, for all $1 \leq t \leq T$. It follows that, similarly,
\begin{equation*}
\frac{Q_{[T \tau]}}{T^{2}} \liml \sige \int_0^{\tau} W_{\eta}^{\, 2}(s) \dd s.
\end{equation*}
We achieve the proof by choosing $\tau=1$ and by applying Theorem \ref{ThmCMT}. \hfill
$\mathbin{\vbox{\hrule\hbox{\vrule height1ex \kern.5em\vrule height1ex}\hrule}}$

\bigskip

\noindent \textbf{Acknowledgements.} The author thanks the anonymous Reviewer for his suggestions and constructive comments which helped to improve substantially the paper.

\nocite{*}

\bibliographystyle{apalike}
\bibliography{STATBIB}
\vspace{-0.2cm}

\bigskip

\end{document}